\newtheorem{Theorem}{Theorem}[section]
\theoremstyle{plain}
\newtheorem{Lemma}[Theorem]{Lemma}
\newtheorem{Definition}[Theorem]{Definition}
\newtheorem{Remark}[Theorem]{Remark}
\newtheorem{Corollary}[Theorem]{Corollary}
\newtheorem{Proposition}[Theorem]{Proposition}
\numberwithin{equation}{section}
\newcommand{\R}{\mathbb R}
\title{Branching Brownian Motion \\ with catalytic branching at the origin}
\author{Sergey Bocharov and Simon C. Harris\\University of Bath}
\begin{document}
\maketitle

\begin{abstract}
We consider a branching Brownian motion 
in which binary fission takes place only when particles are at the origin at a rate $ \beta>0$ on the local time scale.
We obtain results regarding the asymptotic behaviour of the number 
of particles above $ \lambda t$ at time $t$, for $\lambda>0$. 
As a corollary, we establish the almost sure asymptotic speed of the rightmost particle. 
We also prove a Strong Law of Large Numbers for this catalytic branching Brownian motion.
\end{abstract}
\section{Introduction}
\subsection{Model}
In this article we study a branching Brownian motion in which binary fission takes 
place at the origin at rate $ \beta > 0$ on the local time scale. 
That is, if $(X_t: t\leq\tau)$ is the path and $(L_s :s\leq \tau)$ is the local time at the origin of the initial Brownian particle particle up until the first fission time $\tau$,
then the first birth occurs {at the origin} as soon as an independent exponential amount of local time has been accumulated with $L_\tau \stackrel{d}{=} Exp(\beta)$ and $X_\tau=0$.
Once born, particles move off independently from their birth position (at the origin), replicating the behaviour of the parent, and so on.
Heuristically, we have an inhomogeneous branching Brownian motion with instantaneous branching rate
 $\beta(x):= \beta\, \delta_0(x)$ , since we can informally think of Brownian local time at the origin as $L_t = \int_0^t \delta_0(X_s) \,\textrm{d}s $,
where $\delta_0$ is the unit Dirac-mass at $0$. 

Although BBM models have been very widely studied, the degenerate nature of such catalytic branching at the origin means that the above BBM model needs some special treatment.
Related models with catalytic branching have been extensively studied in the context of superprocesses; for example, see Dawson \& Fleischmann \cite{34}, Fleischmann \& Le Gall \cite{35} 
or Engl\"ander \& Turaev \cite{19}. In the discrete setting, catalytic branching random walk models have recently been considered by, for example, Carmona \& Hu \cite{28} and D\"oring \& Roberts \cite{33}.

\subsection{Main results} 
In this section, after first setting up some notation, we will state our main results for BBM with catalytic branching at the origin (presenting them in the order that we will prove them). 

We denote the set of particles present in the system at time $t$ by $N_t$, labelling particles according to the usual Ulam-Harris convention. 
If $u \in N_t$ then the position of particle $u$ at time $t$ is $X^u_t$ and its historical path up to time $t$ is $(X^u_s)_{0 \leq s \leq t}$. Also, we denote the local 
time process of a particle $u \in N_t$ by $(L^u_s)_{0 \leq s \leq t}$. The law of the branching process started with a single initial particle at $x$ is 
denoted by $P^x$ with the corresponding expectation $E^x$.

Firstly, we shall calculate the \emph{expected} population growth.
\begin{Lemma}[Expected total population growth]
\label{E_N} For $t>0$, 
\[
E \Big( |N_t| \Big) = 2 \Phi(\beta \sqrt{t}) e^{\frac{\beta^2}{2}t} \sim 2e^{\frac{\beta^2}{2}t} \text{ as } t \to \infty \text{.}
\]
\end{Lemma}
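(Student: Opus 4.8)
The plan is to compute $E(|N_t|)$ (where the process starts from a single particle at the origin) by a first‑moment, or ``many‑to‑one'', argument, reducing it to an explicit Brownian computation. Concretely, I claim
\[
E^0\big(|N_t|\big) \;=\; \expct\!\left[\exp\big(\beta L_t\big)\right],
\]
where on the right $(L_s)_{0\le s\le t}$ denotes the local time at the origin of a \emph{single} standard Brownian motion started at $0$ (with the same normalisation as in the definition of the model), and $\expct$ is the expectation for that single Brownian motion. Heuristically this is the usual first‑moment identity for branching with instantaneous rate $\beta(x)=\beta\,\delta_0(x)$: following the tagged ``spine'' particle, binary fission at rate $\beta$ on the local‑time scale contributes an expected multiplicative factor $e^{\beta\,\diffd L}$ per unit of accumulated local time, hence $e^{\beta L_t}$ over the whole path.

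To justify this identity in the degenerate setting at hand, I would work directly with $u(t):=E^0(|N_t|)$ and decompose on the first fission time $\tau$. Before $\tau$ there is a single Brownian particle; at $\tau$ it is replaced by two independent copies of the whole process started at $0$. Since $\tau$ is the first time the local time $L$ exceeds an independent $\mathrm{Exp}(\beta)$ variable, $P^0(\tau>s)=\expct[e^{-\beta L_s}]$, and the branching property gives the renewal‑type equation
\[
u(t) \;=\; P^0(\tau>t) \;+\; 2\int_0^t u(t-s)\,P^0(\tau\in\diffd s),
\qquad\text{with } P^0(\tau\in\diffd s) = -\tfrac{\diffd}{\diffd s}\expct[e^{-\beta L_s}]\,\diffd s .
\]
The identity then follows by verifying that $t\mapsto\expct[e^{\beta L_t}]$ solves this equation, or equivalently by iterating it / passing to Laplace transforms. (A second route: approximate $\delta_0$ by smooth bump functions, apply a standard many‑to‑one lemma for each approximation, and pass to the limit.)

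Given the identity, the rest is an explicit Gaussian integral. By Lévy's identity, the local time at $0$ of Brownian motion started at $0$ satisfies $L_t \stackrel{d}{=} |B_t| \stackrel{d}{=} \sqrt t\,|Z|$ with $Z\sim N(0,1)$, so
\begin{align*}
\expct\!\left[e^{\beta L_t}\right]
&= \expct\!\left[e^{\beta\sqrt t\,|Z|}\right]
\;=\; 2\int_0^\infty e^{\beta\sqrt t\, z}\,\frac{1}{\sqrt{2\pi}}\,e^{-z^2/2}\,\diffd z \\
&= 2\,e^{\frac{\beta^2}{2}t}\int_0^\infty \frac{1}{\sqrt{2\pi}}\,e^{-\frac12(z-\beta\sqrt t)^2}\,\diffd z
\;=\; 2\,\Phi(\beta\sqrt t)\,e^{\frac{\beta^2}{2}t},
\end{align*}
where the second line uses completion of the square, $\beta\sqrt t\, z - \tfrac12 z^2 = -\tfrac12(z-\beta\sqrt t)^2 + \tfrac{\beta^2}{2}t$, and the substitution $w=z-\beta\sqrt t$. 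This is precisely the claimed formula, and it incidentally shows $E^0(|N_t|)<\infty$ for all $t$ (the integrand is integrable since $|Z|$ is sub‑Gaussian). Since $\Phi(\beta\sqrt t)\to 1$ as $t\to\infty$, the stated asymptotic $E(|N_t|)\sim 2e^{\frac{\beta^2}{2}t}$ follows immediately.

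The only genuinely non‑routine point is the many‑to‑one identity itself: justifying a first‑moment formula for this singular, local‑time‑driven branching rate — i.e.\ establishing and solving the renewal equation above (or controlling the smooth‑bump approximation) and checking the relevant expectations are finite. Everything downstream is the elementary Gaussian computation carried out above.
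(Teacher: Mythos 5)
Your reduction and final computation are exactly those of the paper: the paper's proof applies its spine Many-to-One Theorem (Theorem \ref{manytoone_0}) to get $E(|N_t|)=\tilde E\big(e^{\beta\tilde L_t}\big)$ and then uses $\tilde L_t\stackrel{d}{=}|N(0,t)|$ and completion of the square to obtain $2\Phi(\beta\sqrt t)e^{\beta^2 t/2}$, which is precisely your Gaussian step (the paper additionally re-derives the asymptotics via the change of measure $\tilde Q_\beta$ and the stationary law, but that is not needed for the statement). Where you differ is in how the first-moment identity $E(|N_t|)=\expct\big[e^{\beta L_t}\big]$ is justified: the paper gets it for free from the spine construction, in which births along the spine form a Cox process with conditional mean $\beta\tilde L_t$, whereas you propose a self-contained renewal argument at the first fission time. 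That route is legitimate and arguably more elementary, but as written it is only a sketch at the one genuinely nontrivial point: you assert, without verification, that $v(t)=\expct[e^{\beta L_t}]$ solves
\[
u(t)=\expct\big[e^{-\beta L_t}\big]+2\int_0^t u(t-s)\,P^0(\tau\in\diffd s),
\]
and you also need uniqueness of locally bounded solutions to conclude $u\equiv v$. The verification can be done, e.g.\ from the pathwise identity
\[
e^{\beta L_t}=e^{-\beta L_t}+2\beta\int_0^t e^{\beta(L_t-L_s)}e^{-\beta L_s}\,\diffd L_s,
\]
taking expectations and using the strong Markov property at the times charged by $\diffd L_s$ (where $X_s=0$), together with $P^0(\tau\in\diffd s\mid \hat{\mathcal F}_t)=\beta e^{-\beta L_s}\diffd L_s$; uniqueness is the standard Gronwall/iteration argument for renewal equations with locally bounded data. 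With that step filled in (or with the smooth-approximation alternative actually carried out), your proof is complete and equivalent to the paper's; the spine formalism simply packages this justification once and for all, which is why the paper's version is a two-line application of Theorem \ref{manytoone_0}.
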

\begin{Lemma}[Expected population growth rates]
For $ \lambda > 0$, let $ N_t^{ \lambda t} := \{ u \in N_t : X_t^u > \lambda t \} $ be the set of particles 
that have an average velocity greater than $ \lambda$ at time $t$. Then, as $t \to \infty$,
\label{E_Nlambda}
\begin{equation*}
\dfrac{1}{t} \log E \Big( |N_t^{\lambda t}| \Big) \to 
\Delta_{\lambda} := \left\{
\begin{array}{rl}
\frac{1}{2} \beta^2 - \beta \lambda & \text{if } \lambda < \beta \\
- \frac{1}{2} \lambda^2 & \text{if } \lambda \geq \beta 
\end{array} \right.
\end{equation*}
\end{Lemma}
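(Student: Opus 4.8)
The plan is to reduce the branching expectation to a functional of a single Brownian motion via a many-to-one lemma, and then to evaluate the resulting local-time functional exactly.

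\textbf{Step 1 (many-to-one reduction).} Since the instantaneous branching rate is formally $\beta\delta_0$, the branching rate accumulated along a path coincides with $\beta L_t$, where $L_t$ denotes Brownian local time at the origin. Hence the many-to-one lemma for this model gives
\begin{equation*}
E\big( |N_t^{\lambda t}| \big) \;=\; \widetilde{\E}^0\Big( e^{\beta L_t}\,\Ind_{\{X_t > \lambda t\}}\Big),
\end{equation*}
where under $\widetilde{\E}^0$ the process $(X_s)_{0\le s\le t}$ is a standard Brownian motion started at the origin and $(L_s)_{0\le s\le t}$ is its local time at $0$. (This is the exact analogue of Lemma \ref{E_N}, which is recovered by letting $\lambda\downarrow 0$ and using the reflection symmetry $X\leftrightarrow -X$; the expectation is finite, and the branching process non-explosive, by Lemma \ref{E_N}.)

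\textbf{Step 2 (explicit Gaussian computation).} I would then use the classical joint law of position and local time at $0$ for Brownian motion from the origin: for $\ell>0$ and $y\in\R$,
\begin{equation*}
\Prob^0\big( X_t \in \diffd y,\; L_t \in \diffd\ell \big) \;=\; \frac{|y|+\ell}{\sqrt{2\pi t^3}}\,\exp\Big(-\frac{(|y|+\ell)^2}{2t}\Big)\,\diffd y\,\diffd\ell .
\end{equation*}
Substituting this and integrating over $y\in(\lambda t,\infty)$ (via $z=y+\ell$, an elementary integral) collapses the double integral to a single one,
\begin{equation*}
E\big( |N_t^{\lambda t}| \big) \;=\; \frac{1}{\sqrt{2\pi t}}\int_0^\infty \exp\Big(\beta\ell - \frac{(\lambda t+\ell)^2}{2t}\Big)\,\diffd\ell ,
\end{equation*}
and completing the square in $\ell$ (critical point $\ell=(\beta-\lambda)t$) yields the closed form
\begin{equation*}
E\big( |N_t^{\lambda t}| \big) \;=\; e^{(\frac12\beta^2-\beta\lambda)t}\,\Phi\big((\beta-\lambda)\sqrt t\,\big),
\end{equation*}
with $\Phi$ the standard normal distribution function.

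\textbf{Step 3 (asymptotics).} Taking $\tfrac1t\log(\cdot)$ and letting $t\to\infty$ finishes the proof. If $\lambda<\beta$ then $(\beta-\lambda)\sqrt t\to+\infty$, so $\Phi\to1$ and the rate is $\tfrac12\beta^2-\beta\lambda$. If $\lambda>\beta$ then $(\beta-\lambda)\sqrt t\to-\infty$, and the Gaussian tail asymptotics $\log\Phi(-x) = -\tfrac12 x^2 + O(\log x)$ contribute a further $-\tfrac12(\lambda-\beta)^2$, so the rate is $\tfrac12\beta^2-\beta\lambda-\tfrac12(\lambda-\beta)^2 = -\tfrac12\lambda^2$; the boundary case $\lambda=\beta$ has $\Phi(0)=\tfrac12$ and again gives $-\tfrac12\beta^2$. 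As a sanity check, applying Laplace's method directly to the double integral before the collapse identifies the rate as $\sup\{\beta v-\tfrac12(u+v)^2 : u\ge\lambda,\ v\ge0\}$, whose maximizer is $(\lambda,\beta-\lambda)$ when $\lambda<\beta$ and $(\lambda,0)$ when $\lambda\ge\beta$, producing the same two values.

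I expect the one genuinely delicate point to be Step 1: justifying the many-to-one identity in this degenerate, local-time-driven branching mechanism, i.e. that the total branching rate accumulated along the spine is exactly $\beta L_t$. Once that is granted the remainder is an exact Gaussian computation. (For an initial particle placed away from the origin the same argument applies with the joint law of $(X_t,L_t)$ from a general starting point; the extra atom on $\{L_t=0\}$ contributes at exponential rate at most $-\tfrac12\lambda^2\le\Delta_\lambda$, so the limit $\Delta_\lambda$ is unchanged.)
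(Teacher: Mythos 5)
Your proposal is correct, and after the common first step it takes the route the paper mentions but does not carry out. Both arguments begin identically, with the Many-to-One reduction $E(|N_t^{\lambda t}|)=\tilde E\big(e^{\beta \tilde L_t}\Ind_{\{\xi_t>\lambda t\}}\big)$ (Theorem \ref{manytoone_0}); the delicate point you flag, that the accumulated branching rate along the spine is exactly $\beta\tilde L_t$, is precisely what the spine construction of Section \ref{spinesection} supplies, so nothing further is needed there. From that point the paper changes measure with the Girsanov-type martingale $\tilde M^\beta_t$ of \eqref{M_beta}, writes the expectation as $e^{\frac{1}{2}\beta^2 t}E^{\tilde Q_\beta}\big(e^{\beta|\xi_t|}\Ind_{\{\xi_t>\lambda t\}}\big)$, and integrates the explicit transition density $p(t;0,x)$ of Proposition \ref{prop_drift_to_0} against $m(\diffd x)$, splitting the answer into a Gaussian piece and an Erfc piece whose asymptotics give $\Delta_\lambda$; you instead integrate the joint law \eqref{joint_l_xi} of $(\xi_t,\tilde L_t)$ directly, which is exactly the alternative the authors point to right after their proof. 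Your computation checks out: collapsing the $y$-integral and completing the square gives the exact identity $E(|N_t^{\lambda t}|)=e^{(\frac12\beta^2-\beta\lambda)t}\,\Phi\big((\beta-\lambda)\sqrt t\big)$, which is consistent with Lemma \ref{E_N} as $\lambda\downarrow 0$ (up to the symmetry factor $2$) and yields $\Delta_\lambda$ in all three regimes $\lambda<\beta$, $\lambda=\beta$, $\lambda>\beta$ via the standard Gaussian tail estimate. What each approach buys: yours is more elementary and produces a clean closed formula with explicit subexponential prefactor, avoiding the paper's $\epsilon_i(t)$ bookkeeping; the paper's change-of-measure route is chosen deliberately because it showcases the martingale $\tilde M^\beta$ and the drifted single-particle process that drive the rest of the article (the a.s. growth rates and the SLLN), a structural payoff your calculation does not provide. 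Your closing parenthetical about a starting point $x\neq 0$ is harmless but unnecessary, since the lemma concerns the process started at the origin, where $L_t>0$ a.s. for $t>0$ and there is no atom to handle.
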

Note, the expected growth rate of particles with velocities greater than $\lambda>0$, $\Delta_{\lambda}$, is positive or negative according to whether $ \lambda$ is less than or greater than $\beta/2$, respectively. 
That is, the \emph{expectation} speed of the rightmost particle is $\beta/2$. (Also note, by symmetry, similar results hold throughout for particles with negative velocities.)

Next, we consider the \emph{almost sure} asymptotic behaviour of the population. 
\begin{Theorem}[Almost sure total population growth rate]
\label{N_as}
\[
\lim_{t \to \infty} \frac{ \log |N_t|}{t} = \frac{1}{2} \beta^2 \qquad P \text{-a.s.}
\]
\end{Theorem}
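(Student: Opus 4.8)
The plan is to prove the two matching bounds separately: the upper bound $\limsup_{t\to\infty} t^{-1}\log|N_t| \leq \tfrac12\beta^2$ almost surely, and the lower bound $\liminf_{t\to\infty} t^{-1}\log|N_t| \geq \tfrac12\beta^2$ almost surely. For the upper bound I would use the first-moment estimate already established in Lemma~\ref{E_N}: since $E(|N_t|) = 2\Phi(\beta\sqrt t)e^{\beta^2 t/2} \sim 2e^{\beta^2 t/2}$, Markov's inequality gives, for any $\eps>0$, $P(|N_t| > e^{(\beta^2/2 + \eps)t}) \leq 2\Phi(\beta\sqrt t)e^{-\eps t}$. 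Summing this over integer times $t = n \in \N$ and applying Borel--Cantelli shows $|N_n| \leq e^{(\beta^2/2 + \eps)n}$ eventually, almost surely. To upgrade from integer times to all real $t$, note that $t\mapsto |N_t|$ is nondecreasing, so $|N_t| \leq |N_{\lceil t\rceil}| \leq e^{(\beta^2/2+\eps)\lceil t\rceil}$, which still has the right exponential rate as $t\to\infty$; letting $\eps\downarrow 0$ through a countable sequence completes the upper bound.

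**The lower bound via a spine/change-of-measure martingale.** The harder direction requires producing enough particles. The natural tool is an additive martingale: define, for a suitable parameter, something like $M_t := e^{-\beta^2 t/2}\sum_{u\in N_t} f(X^u_t, L^u_t)$ where $f$ is chosen so that $M_t$ is a mean-one martingale — concretely one expects $f$ related to the harmonic-type function for the single-particle motion tilted by branching on local time, which from the expected-growth computation should make $e^{-\beta^2 t/2}|N_t|$-type quantities tractable. I would then show this martingale is uniformly integrable (equivalently $L^1$-convergent with strictly positive limit) by a spine decomposition argument: under the change of measure the spine particle is a Brownian motion whose local time at the origin drives an accelerated Poissonian birth process, and one checks the spine spends enough local time at the origin that the martingale does not degenerate. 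Once $M_t \to M_\infty > 0$ a.s. on the survival event (which here is the whole space, since the process never dies), one reads off $|N_t| \geq e^{(\beta^2/2 - \eps)t}$ eventually.

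**Main obstacle.** The principal difficulty is the degeneracy of the branching mechanism: branching occurs only on a measure-zero set (the origin), so the relevant martingale is not one of the classical BBM additive martingales and the function $f$ must be found by solving the appropriate one-particle problem (a Feynman--Kac / local-time functional computation, effectively diagonalising the generator with a $\delta_0$ potential). Establishing uniform integrability of this martingale is the crux: one must control the spine's occupation of the origin carefully, and show the expected additive contribution along the spine stays bounded — a many-to-one / spine calculation where the local time at $0$ of a Brownian motion (which is a.s. finite on any finite horizon but grows like $\sqrt t$) must be handled with care. A cleaner alternative for the lower bound, avoiding a delicate UI proof, is to bound $|N_t|$ below by the number of descendants of particles that return to a neighbourhood of the origin and re-seed the branching: one can couple with a Galton--Watson-type lower bound or use a second-moment (Paley--Zygmund) argument on $M_t$, and I would fall back on that if the full $L^1$-convergence proves awkward. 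In any case the matching rate $\tfrac12\beta^2$ is forced by Lemma~\ref{E_N}, so the whole proof is about showing concentration around the first moment.
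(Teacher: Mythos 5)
Your proposal matches the paper's argument essentially step for step: the upper bound via Markov's inequality with Lemma \ref{E_N}, Borel--Cantelli along integer times and monotonicity of $|N_t|$, and the lower bound by bounding $|N_t|e^{-\beta^2 t/2}$ below by the additive martingale and invoking its uniform integrability with $M_\infty>0$, proved exactly as you sketch by a spine decomposition (the paper's $M_t=\sum_{u\in N_t}e^{-\beta|X^u_t|-\beta^2 t/2}$ with $f(x)=e^{-\beta|x|}\le 1$, Theorem \ref{UI_Mbeta}). The only difference is that the paper has this martingale and its convergence ready-made from Section \ref{spinesection}, whereas you describe constructing it, but that is the same route.
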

\begin{Theorem}[Almost sure population growth rates]
\label{Nlambda_as}
Let $ \lambda > 0$, then:
\begin{enumerate}
\item if $\lambda > \frac{ \beta}{2}$ then $ \lim_{t \to \infty} |N_t^{ \lambda t}| = 0 \ P$- a.s.
\item if $\lambda < \frac{ \beta}{2}$ then $ \lim_{t \to \infty} \dfrac{\log |N_t^{ \lambda t}|}{t} 
= \Delta_{ \lambda} = \frac{1}{2} \beta^2 - \beta \lambda \ P$-a.s.
\end{enumerate}
\end{Theorem}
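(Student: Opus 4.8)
I would prove $\limsup_{t\to\infty} t^{-1}\log|N_t^{\lambda t}|\le\Delta_\lambda$ and $\liminf_{t\to\infty} t^{-1}\log|N_t^{\lambda t}|\ge\Delta_\lambda$ separately (throughout, $\lambda<\beta/2<\beta$, so $\Delta_\lambda=\tfrac12\beta^2-\beta\lambda$). Part (i) comes for free from the upper bound, since $\Delta_\lambda<0$ exactly when $\lambda>\beta/2$: the upper bound then forces $|N_t^{\lambda t}|=0$ for all large $t$. The upper bound is the soft direction --- Lemma~\ref{E_Nlambda} together with a first-moment/Borel--Cantelli argument. The lower bound is the substantive part: I would run the process for a time $rt$ with the carefully chosen split $r:=1-\lambda/\beta\in(\tfrac12,1)$, at which point there are $\asymp e^{\beta^2 rt/2}$ particles bunched near the origin, and then force a single descendant lineage out of each of these to travel ballistically out past level $\lambda t$; conditional independence of the distinct sub-trees should turn the resulting conditionally binomial count into an almost-sure lower bound.

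\textbf{Upper bound (and part (i)).} Since $\lambda t\ge\lambda n$ for $t\in[n,n+1]$, and since particles never die in this model, the left-most descendant at time $n+1$ of any $u\in N_t$ with $X^u_t>\lambda n$ has spatial path exceeding $\lambda n$ somewhere on $[n,n+1]$, and distinct $u$'s give distinct such descendants; hence $\sup_{t\in[n,n+1]}|N_t^{\lambda t}|\le Y_n$, where $Y_n:=\#\{v\in N_{n+1}:\sup_{n\le s\le n+1}X^v_s>\lambda n\}$. By the many-to-one lemma $E(Y_n)=E\big(e^{\beta L_{n+1}}\Ind(\sup_{n\le s\le n+1}\xi_s>\lambda n)\big)$, where $\xi$ is a Brownian motion with local time $L$ at $0$; using the explicit joint density of $(L_n,\xi_n)$ exactly as in the proof of Lemma~\ref{E_Nlambda} (the extra unit of time and the running supremum only affect sub-exponential factors) one gets $E(Y_n)=e^{(\Delta_\lambda+o(1))n}$. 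If $\lambda>\beta/2$ then $\Delta_\lambda<0$, so $\sum_n E(Y_n)<\infty$ and Borel--Cantelli gives $Y_n=0$, hence $|N_t^{\lambda t}|=0$, for all large $t$ --- this is (i). If $\lambda<\beta/2$, then for fixed $\eps>0$ Markov's inequality gives $P(Y_n>e^{(\Delta_\lambda+\eps)n})\le e^{-\eps n/2}$ for all large $n$, which is summable; Borel--Cantelli and then $\eps\downarrow0$ along a rational sequence give $\limsup_t t^{-1}\log|N_t^{\lambda t}|\le\Delta_\lambda$ almost surely.

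\textbf{Lower bound.} Put $r=1-\lambda/\beta$ and $s_n:=r(n+1)$; note $s_n<n$ for all large $n$. \emph{(a) Particles near the origin.} The natural additive martingale here is $Z_t:=\sum_{u\in N_t}e^{-\beta|X^u_t|}e^{-\beta^2 t/2}$; indeed $\phi(x)=e^{-\beta|x|}$ solves $\tfrac12\phi''+\beta\delta_0\phi=\tfrac12\beta^2\phi$ (the $\delta_0$ producing the required slope jump $-2\beta\phi(0)$). I would use that $Z_t\to Z_\infty\in(0,\infty)$ almost surely --- a standard non-degeneracy fact, provable e.g.\ by an $L^2$ argument or a spine decomposition, and in any case implicit in the proof of Theorem~\ref{N_as} --- together with the first-moment bound $E\big(\sum_{u}e^{-\beta|X^u_t|}\Ind(|X^u_t|>a)\big)\le C e^{-\beta a}e^{\beta^2 t/2}$; combining these (with the truncation level $a_n$ allowed to grow slowly, say logarithmically, so that Borel--Cantelli applies along $(s_n)$) yields: almost surely there is $c>0$ with $M_n:=\#\{u\in N_{s_n}:|X^u_{s_n}|\le a_n\}\ge c\,e^{\beta^2 s_n/2}$ for all large $n$. \emph{(b) One lineage escapes.} From each $u\in N_{s_n}$, follow a single descendant lineage (always taking, say, the first child at a branch point); this is an ordinary Brownian motion issued from $X^u_{s_n}$, and lineages from distinct $u$ are conditionally independent given $\F_{s_n}$. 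Declare lineage $u$ a \emph{success} if it lies above level $\lambda(n+1)$ throughout $[n,n+1]$; since $\lambda t\le\lambda(n+1)$ there, each success contributes a particle to $N_t^{\lambda t}$ for every $t\in[n,n+1]$, and distinct successes contribute distinct particles. For $|X^u_{s_n}|\le a_n$ the conditional success probability is at least $c_1\,(1-\Phi)\!\big((\lambda(n+1)+C)/\sqrt{n-s_n}\big)$ for fixed $c_1,C>0$ (the constant $C$ buying enough headroom that, once above $\lambda(n+1)+C$ at time $n$, the motion does not drop below $\lambda(n+1)$ before $n+1$), and since $n-s_n=(\lambda/\beta)(n+1)-1\sim(\lambda/\beta)n$ the Gaussian tail estimate makes this $e^{-\lambda\beta n/2+o(n)}=:p_n$. \emph{(c) Concentration.} Hence, conditionally on $\F_{s_n}$, $\inf_{t\in[n,n+1]}|N_t^{\lambda t}|$ stochastically dominates a $\mathrm{Binomial}(M_n,p_n)$, whose mean is $\ge c\,e^{\beta^2 s_n/2}p_n=e^{\Delta_\lambda n+o(n)}\to\infty$ (using $\tfrac12\beta^2 r-\tfrac12\lambda\beta=\tfrac12\beta^2-\beta\lambda=\Delta_\lambda>0$). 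A Chernoff bound makes $P(\mathrm{Binomial}(M_n,p_n)<\tfrac12 M_n p_n\mid\F_{s_n})$ doubly exponentially small; after the routine decomposition over the possible values of the random constant $c$ (and of the threshold beyond which the Step-(a) bound holds), Borel--Cantelli gives $\inf_{t\in[n,n+1]}|N_t^{\lambda t}|\ge e^{(\Delta_\lambda-\eps)n}$ for all large $n$ almost surely, and $\eps\downarrow0$ finishes the lower bound.

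\textbf{Main obstacle.} The lower bound is where the work is, and two points need care. First, Step~(a) requires a genuine almost-sure lower bound on the number of particles staying \emph{near} the origin, with the \emph{exact} exponential rate $\beta^2/2$ --- the bare rate statement of Theorem~\ref{N_as} is not enough, and one really needs non-degeneracy of the additive martingale, $Z_\infty>0$ a.s., which I would establish (or extract) alongside Theorem~\ref{N_as}. Second, the probabilistic bookkeeping in Step~(c) --- handling the random constant, and arranging that the escaped lineages keep $|N_t^{\lambda t}|$ large \emph{uniformly} over $t\in[n,n+1]$ rather than only at lattice points --- is routine but finicky. It is also worth checking at the outset that $r=1-\lambda/\beta$ is indeed the split maximising $\tfrac12\beta^2 r-\lambda^2/(2(1-r))$, so that the construction really attains the rate $\Delta_\lambda$ and not a smaller one.
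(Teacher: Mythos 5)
Your proposal is correct and follows the same overall strategy as the paper: a first-moment/Borel--Cantelli bound on the unit-interval counts $\#\{v\in N_{n+1}:\sup_{s\in[n,n+1]}X^v_s\ge\lambda n\}$ for the upper bound (which also handles all real $t$ and gives part (i)), and for the lower bound the split time $p=1-\lambda/\beta$, one chosen descendant lineage per particle alive at time $pn$, conditional independence of the subtrees, and a Chernoff-plus-Borel--Cantelli step at rate $\tfrac12\beta^2p-\tfrac12\beta\lambda=\Delta_\lambda$. The places where you diverge are minor and both legitimate. For the lower bound you localize the time-$pn$ population near the origin via the non-degenerate martingale limit $M_\infty>0$ (Theorem~\ref{UI_Mbeta}) together with a truncation at a slowly growing level, so that a one-sided escape above $\lambda(n+1)$ has probability $e^{-\beta\lambda n/2+o(n)}$; the paper instead invokes only Theorem~\ref{N_as} and notes that the two-sided escape probability $P(|X^u_s|>\lambda s\ \forall s\in[n,n+1])\gtrsim e^{-\beta\lambda n/2}$ is bounded below uniformly in the starting position (the worst case being a start at the origin), afterwards restoring the sign by symmetry with a second Borel--Cantelli. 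Your route pays for an extra truncation step but avoids the symmetry step, and the underlying input is the same since Theorem~\ref{N_as} is itself deduced from $M_\infty>0$. You also get part (i) directly from summability of $E(Y_n)$, whereas the paper deduces it from $\limsup_t t^{-1}\log|N_t^{\lambda t}|\le\Delta_\lambda<0$ plus integrality of the count; both work. One compression worth flagging: your remark that the running supremum over $[n,n+1]$ ``only affects sub-exponential factors'' in $E(Y_n)$ is true but is precisely where the paper does some work --- one must split according to whether $|\xi_{n+1}|$ exceeds $(\lambda-\delta)(n+1)$ and play the super-exponentially small tail of the unit-time supremum against the merely exponential moments of $e^{\beta\tilde L_{n+1}}$ (a naive Cauchy--Schwarz gives the wrong exponent), then let $\delta\downarrow0$ using continuity of $\lambda\mapsto\Delta_\lambda$.
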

From Theorem \ref{Nlambda_as}, we immediately recover the speed of the rightmost particle,
\[
R_t := \sup_{u \in N_t} X^u_t \ \text{, } \qquad t \geq 0 \text{.}
\]
\begin{Corollary}[Rightmost particle speed]
\label{Th_rightmost}
\[
\lim_{t \to \infty} \frac{R_t}{t} = \dfrac{\beta}{2} \quad P \text{-a.s.}
\]
\end{Corollary}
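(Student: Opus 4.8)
The plan is to read off both the upper and the lower bound on $R_t/t$ directly from Theorem \ref{Nlambda_as}, exploiting two elementary facts: for each fixed $t$ the quantity $|N_t^{\lambda t}|$ is a nonnegative integer, and $R_t > \lambda t$ holds if and only if $|N_t^{\lambda t}| \geq 1$. (We also note in passing that $R_t < \infty$ for every $t$, since $E|N_t|<\infty$ by Lemma \ref{E_N} so $|N_t|<\infty$ $P$-a.s., and each particle occupies a finite position.)

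\textbf{Upper bound.} Fix a rational $\lambda > \beta/2$. By part (i) of Theorem \ref{Nlambda_as} we have $|N_t^{\lambda t}| \to 0$ $P$-a.s. Since $|N_t^{\lambda t}|$ is integer-valued, on this almost sure event there is a random finite time $T_\lambda$ with $|N_t^{\lambda t}| = 0$, and hence $R_t \leq \lambda t$, for all $t \geq T_\lambda$. Therefore $\limsup_{t\to\infty} R_t/t \leq \lambda$ $P$-a.s. Intersecting over the countably many rationals $\lambda > \beta/2$ and letting $\lambda \downarrow \beta/2$ gives $\limsup_{t\to\infty} R_t/t \leq \beta/2$ $P$-a.s.

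\textbf{Lower bound.} Fix a rational $\lambda$ with $0 < \lambda < \beta/2$. By part (ii) of Theorem \ref{Nlambda_as}, $t^{-1}\log|N_t^{\lambda t}| \to \Delta_\lambda = \tfrac12\beta^2 - \beta\lambda > 0$ $P$-a.s.; in particular $|N_t^{\lambda t}| \to \infty$, so $|N_t^{\lambda t}| \geq 1$ for all sufficiently large $t$, i.e.\ $R_t > \lambda t$ eventually. Hence $\liminf_{t\to\infty} R_t/t \geq \lambda$ $P$-a.s.; intersecting over rationals and letting $\lambda \uparrow \beta/2$ gives $\liminf_{t\to\infty} R_t/t \geq \beta/2$ $P$-a.s. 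Combining with the upper bound proves $\lim_{t\to\infty} R_t/t = \beta/2$ $P$-a.s.

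There is essentially no serious obstacle here; the corollary is a soft consequence of Theorem \ref{Nlambda_as}. The only points needing a little care are (a) the order of quantifiers, since the null set supplied by Theorem \ref{Nlambda_as} depends on $\lambda$, which is why one must pass to a countable dense set of parameters before taking the limit $\lambda \to \beta/2$, and (b) the observation that convergence to $0$ of an integer-valued process forces it to vanish identically beyond a random time, which is what converts part (i) into the statement ``$R_t \leq \lambda t$ eventually.''
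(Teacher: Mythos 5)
Your proof is correct and follows essentially the same route as the paper: both directions are read off from Theorem \ref{Nlambda_as}, with $\lambda<\beta/2$ giving $|N_t^{\lambda t}|\geq 1$ eventually (hence the lower bound) and $\lambda>\beta/2$ giving $|N_t^{\lambda t}|=0$ eventually (hence the upper bound), then letting $\lambda\to\beta/2$. Your extra remarks about restricting to a countable dense set of $\lambda$ and about integer-valuedness turning convergence to $0$ into eventual vanishing only make explicit points the paper leaves implicit (the latter is its Remark \ref{rem_logN}).
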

We can also say something about the rare events of $|N_t^{\lambda t}|$ being positive when we typically do not find particles with speeds $\lambda>\frac{\beta}{2}$.
\begin{Lemma}[Unusually fast particles]
\label{log_P}
For $ \lambda > \frac{ \beta}{2}$, 
\[
\lim_{t \to \infty} \frac{\log P (|N_t^{\lambda t}| \geq 1)}{t} = \Delta_{ \lambda} = - \frac{1}{2} \lambda^2 \text{.}
\]
\end{Lemma}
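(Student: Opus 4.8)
The plan is to sandwich $t^{-1}\log P(|N_t^{\lambda t}|\ge1)$ between two bounds both equal to $\Delta_\lambda$ (which is $-\tfrac12\lambda^2$ precisely when $\lambda\ge\beta$, the value asserted there). The upper bound is immediate from the first moment: by Markov's inequality $P(|N_t^{\lambda t}|\ge1)\le E(|N_t^{\lambda t}|)$, and Lemma~\ref{E_Nlambda} gives $t^{-1}\log E(|N_t^{\lambda t}|)\to\Delta_\lambda$, so $\limsup_{t\to\infty}t^{-1}\log P(|N_t^{\lambda t}|\ge1)\le\Delta_\lambda$.

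For the matching lower bound I would produce an event of probability $e^{(\Delta_\lambda+o(1))t}$ on which $N_t^{\lambda t}\neq\emptyset$. When $\lambda\ge\beta$ the efficient scenario is for the \emph{initial} particle to run straight past $\lambda t$ before its local-time clock (an independent $\mathrm{Exp}(\beta)$ amount of local time) fires. Conditioning on the Brownian path and integrating out the clock,
\[
P\big(|N_t^{\lambda t}|\ge1\big)\ \ge\ E_0\!\Big(e^{-\beta L_t}\,\Ind_{\{\xi_t>\lambda t\}}\Big)\ \ge\ e^{-\beta}\,P_0\big(\xi_t>\lambda t,\ L_t\le1\big),
\]
where $(\xi_s,L_s)_{s\le t}$ is one Brownian motion and its local time at $0$; by the explicit joint law of $(\xi_t,L_t)$ (density $\propto (|y|+\ell)\,t^{-3/2}\exp(-(|y|+\ell)^2/(2t))$) the last probability is of order $t^{-1}e^{-\lambda^2 t/2}$, giving $\liminf_{t\to\infty}t^{-1}\log P(|N_t^{\lambda t}|\ge1)\ge-\tfrac12\lambda^2=\Delta_\lambda$. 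When $\tfrac\beta2<\lambda<\beta$ it is cheaper to let the population breed near the origin for a time $\alpha t$ and only then send one line out ballistically: for any $\mu<\beta/2$, by Theorem~\ref{Nlambda_as}(ii) there are, with probability tending to $1$, at least $e^{(\Delta_\mu-\eps)\alpha t}$ particles above $\mu\alpha t$ at time $\alpha t$, all below $\lambda t$ by Corollary~\ref{Th_rightmost}; conditionally on $\F_{\alpha t}$ their sub-trees evolve independently (branching property), so the chance that \emph{some} one of them reaches $\lambda t$ by time $t$ is at least $1-(1-p)^n$ with $n\approx e^{\Delta_\mu\alpha t}$ and $p\approx e^{-(\lambda-\mu\alpha)^2 t/(2(1-\alpha))}$; since $np\to0$ this is $\gtrsim np$, and maximising $\Delta_\mu\alpha-(\lambda-\mu\alpha)^2/(2(1-\alpha))$ over admissible $(\alpha,\mu)$ (optimiser $\mu\alpha=\lambda-\beta(1-\alpha)$) returns exactly $\Delta_\lambda$; note $\alpha=\mu=0$ degenerates to the first construction.

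The first-moment upper bound is essentially automatic, so the work sits in the lower bound. For $\lambda\ge\beta$ the only genuine step is the bivariate Laplace-type estimate for the $(\xi_t,L_t)$ density and checking that restricting to $\{L_t\le1\}$ costs only a polynomial factor. For $\tfrac\beta2<\lambda<\beta$ the delicate point is the conditional-independence step: decoupling the time-$\alpha t$ sub-trees via the branching property, upgrading Theorem~\ref{Nlambda_as}(ii) to a lower-deviation statement holding with probability tending to one (rather than merely the almost-sure limit), and then carrying out the two-parameter optimisation. I expect this last case to be the main obstacle.
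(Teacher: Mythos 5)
Your proposal is correct and is essentially the paper's own argument: the upper bound is the same Markov/first-moment bound via Lemma \ref{E_Nlambda}, and your lower bound is the paper's breed-until-time-$\alpha t$-then-sprint construction with conditionally independent subtrees and a binomial/union estimate (the paper takes $\mu=0$, $\alpha=1-\lambda/\beta$, counting all of $N_{\alpha t}$ via Theorem \ref{N_as} together with the symmetry bound $P(|N_t^{\lambda t}|\ge 1)\ge\frac12 P(|N_t^{\pm\lambda t}|\ge 1)$, and for $\lambda\ge\beta$ it simply takes $\alpha=0$ and one chosen line of descent, so your $(\xi_t,\tilde L_t)$-density computation and the extra level parameter $\mu$ are harmless variations that optimise to the same $\Delta_\lambda$). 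The ``obstacles'' you flag are routine: Theorem \ref{Nlambda_as} is an almost sure statement, so $P\big(|N_{\alpha t}^{\mu\alpha t}|\ge e^{(\Delta_\mu-\eps)\alpha t}\big)\to 1$ immediately, and the decoupling is just the branching Markov property, used in exactly this way in the paper's proof.
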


\newpage
Finally, our main theorem gives a strong law of large numbers for the catalytic BBM:
\begin{Theorem}[SLLN]
\label{new_SLLN}
Let $f : \mathbb{R} \to \mathbb{R} $ be some Borel-measurable bounded function. Then
\[
\lim_{t \to \infty} e^{- \frac{ \beta^2}{2}t} \sum_{u \in N_t} f(X^u_t) = M_{\infty} \int_\R f(x) \beta e^{- \beta |x|} \mathrm{d}x  \quad P \text{-a.s.,}
\]
where $M_{\infty}$ is the almost sure limit of the $P$-uniformly integrable additive martingale 
\[
M_t = \sum_{u \in N_t} \exp \big\{ - \beta |X^u_t| - \frac{1}{2} \beta^2 t \big\} \text{.}
\]
(Note: the martingale $(M_t)_{t \geq 0}$ will be discussed in detail in section \ref{spinesection}.) 
\end{Theorem}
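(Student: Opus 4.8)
The plan is to prove the strong law via the spine change of measure associated with the martingale $M$ (developed in Section~\ref{spinesection}), a second-moment estimate, and a Borel--Cantelli argument along a lattice of times. Abbreviate $\langle f\rangle:=\int_\R f(x)\beta e^{-\beta|x|}\,\diffd x$, fix a small $r\in(0,1)$, and set $Y_t:=e^{-\beta^2t/2}\sum_{u\in N_t}f(X^u_t)$, so the claim is $Y_t\to M_\infty\langle f\rangle$ $P$-a.s. Conditioning on $\F_{rt}$ and using the branching property, $Y_t=e^{-\beta^2t/2}\sum_{v\in N_{rt}}S_v$, where the $S_v$ are conditionally independent and each is distributed as $\sum_{w\in N_{(1-r)t}}f(X^w_{(1-r)t})$ under $P^{X^v_{rt}}$. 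I will establish (i) $E[Y_t\mid\F_{rt}]\to M_\infty\langle f\rangle$ $P$-a.s., and (ii) $Y_t-E[Y_t\mid\F_{rt}]\to0$ $P$-a.s.; step (ii) and the passage from the lattice to all $t$ are routine, and step (i) is the real issue.

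For the conditional mean, the many-to-one lemma identifies the first moment of a subtree started at $y$ over a time $s$ as $a_s(y):=e^{-\beta^2 s/2}E^y[e^{\beta L_s}f(\xi_s)]$, with $\xi$ a Brownian motion and $L$ its local time at the origin. By Tanaka's formula, $\exp\{\beta L_t-\beta|\xi_t|-\tfrac12\beta^2t\}$ is a $P^y$-martingale (equal to $e^{-\beta|y|}$ at time $0$; it is the single-spine analogue of $M$), and the associated change of measure turns $\xi$ into the diffusion with generator $\tfrac12\partial_{xx}-\beta\,\mathrm{sgn}(\cdot)\,\partial_x$ --- a positive recurrent process with invariant density $\beta e^{-2\beta|x|}$. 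Writing $a_s(y)=e^{-\beta|y|}\,\widetilde E^y[e^{\beta|\xi_s|}f(\xi_s)]$ and using exponential ergodicity, together with uniform integrability of $e^{\beta|\xi_s|}$ (which holds because $|\xi_s|$ is stochastically dominated by $|y|$ plus a reflected drift-$(-\beta)$ Brownian motion, whose stationary law is $\mathrm{Exp}(2\beta)$), one gets $a_s(y)\to e^{-\beta|y|}\langle f\rangle$ as $s\to\infty$ for each fixed $y$ and every bounded measurable $f$. Since $E[Y_t\mid\F_{rt}]=e^{-\beta^2rt/2}\sum_{v\in N_{rt}}a_{(1-r)t}(X^v_{rt})$, while $M_{rt}=e^{-\beta^2rt/2}\sum_{v\in N_{rt}}e^{-\beta|X^v_{rt}|}\to M_\infty$, step (i) will follow as soon as the convergence $a_s(y)\to e^{-\beta|y|}\langle f\rangle$ can be made uniform over the positions $\{X^v_{rt}:v\in N_{rt}\}$.

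For the conditional variance I would use the many-to-two formula: in the second moment of a subtree sum the two spines coalesce at a branch event which, since branching occurs only at the origin, sits at $0$; after it, each of the two descendant subtrees has first moment at most $\|f\|_\infty E^0[e^{\beta L_u}]\lesssim\|f\|_\infty e^{\beta^2u/2}$ over its remaining lifetime $u$, by Lemma~\ref{E_N}. Summing these over the branch times along the first spine (an inverse-local-time computation, using that inverse local time of Brownian motion is a stable-$\tfrac12$ subordinator, makes the resulting integral converge, uniformly in the starting point) yields $E^y\big[\big(\sum_{w\in N_s}f(X^w_s)\big)^2\big]\le C\|f\|_\infty^2 e^{\beta^2 s}$ uniformly in $y$, hence $\mathrm{Var}(Y_t\mid\F_{rt})\le C\|f\|_\infty^2 e^{-\beta^2rt}|N_{rt}|$. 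Taking expectations and applying Lemma~\ref{E_N} again gives $E\,\mathrm{Var}(Y_n\mid\F_{rn})\lesssim\|f\|_\infty^2 e^{-\beta^2rn/2}$, which is summable in $n$; so $\sum_nP(|Y_n-E[Y_n\mid\F_{rn}]|>\eps)<\infty$ for every $\eps>0$, and Borel--Cantelli yields step (ii) along the integers. Together with step (i) this gives $Y_n\to M_\infty\langle f\rangle$ $P$-a.s.\ along $n\in\N$; the extension to all $t$ comes from bounding the oscillation of $Y_t$ on $[n,n+1]$ via path-continuity and the a.s.\ growth rate $|N_t|=e^{(\beta^2/2+o(1))t}$ of Theorem~\ref{N_as} (alternatively, run the whole argument along $\delta\N$ for arbitrary $\delta>0$). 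The argument applies verbatim to bounded measurable $f$, the conditional-mean and conditional-variance estimates never having used continuity.

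The step I expect to be the main obstacle is the uniformity needed in (i): $a_s(y)\to e^{-\beta|y|}\langle f\rangle$ is a priori only pointwise, yet it is evaluated at the roughly $e^{\beta^2rt/2}$ positions $X^v_{rt}$, which are spread out over a range growing linearly in $t$. My plan to overcome it is to invoke Corollary~\ref{Th_rightmost}: eventually $P$-a.s.\ every $X^v_{rt}$ lies in the window $|x|\le(\tfrac\beta2+\eps)rt$, and if $r$ is chosen small enough that, with $s=(1-r)t$, the inequality $(\tfrac12+\tfrac{\eps}{\beta})\tfrac{r}{1-r}<1$ holds, then a spine started anywhere in that window reaches the origin well before time $s$, leaving a span of order $s$ for exponential equilibration from near $0$; this makes $\sup\{|a_{(1-r)t}(y)e^{\beta|y|}-\langle f\rangle|:|y|\le(\tfrac\beta2+\eps)rt\}\to0$, whence $|E[Y_t\mid\F_{rt}]-\langle f\rangle M_{rt}|\le o(1)\,M_{rt}\to0$. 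Marshalling the uniform spine estimates --- exponential mixing and control of the $e^{\beta|\xi_s|}$-moment over a window that itself grows --- and fitting them together with the right interplay of the limits in $t$ (and, if necessary, a final limit in $r$) is the technical heart of the argument.
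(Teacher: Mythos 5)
Your lattice-time argument is essentially sound, and in one respect it takes a genuinely different and more direct route than the paper. The skeleton is the same: you condition at time $rt$ and let the remaining window $(1-r)t$ grow proportionally, exactly as the paper conditions at $n\delta$ with gap $m_n\delta=Kn\delta$; your step (i) is the paper's Part II, and the uniformity problem you single out is resolved there precisely by your proposed mechanism --- the rightmost-particle bound of Corollary \ref{Th_rightmost} confines all positions to a window of slope $M>\beta/2$, and the ratio of conditioning time to gap is chosen so that the Gaussian part of the explicit density of Proposition \ref{prop_drift_to_0} dies out while the Erfc part tends to $2$ uniformly. (Quantitatively, the Gaussian term forces $(\frac{\beta}{2}+\eps)r<\frac{\beta}{2}(1-r)$, which is a factor $2$ more stringent than your stated inequality $(\frac12+\frac{\eps}{\beta})\frac{r}{1-r}<1$; harmless, since you only need some small $r$.) Where you diverge is step (ii): the paper first reduces to $f(x)=e^{-\beta|x|}\mathbf{1}_B(x)$ so that $U_t\le M_t$, then uses the $p\in(1,2)$ inequality \eqref{ten} together with the $L^p$-boundedness of Theorem \ref{Lp}; you instead use a many-to-two second-moment bound. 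That bound is correct and uniform in the starting point: the off-diagonal term equals $2\beta E^y\int_0^s e^{\beta L_r}\big(m_{s-r}f(0)\big)^2\,\mathrm{d}L_r$ with $m_uf(0)\le 2\|f\|_\infty e^{\beta^2u/2}$ by Lemma \ref{E_N} and $E^y e^{\beta L_r}\le Ce^{\beta^2 r/2}$, giving $E^y\big[(\sum_{w\in N_s}f(X^w_s))^2\big]\le C\|f\|_\infty^2e^{\beta^2 s}$ (no inverse-local-time subtlety is needed). This $L^2$ route works because in this catalytic model the normalized population is $L^2$-bounded, and it lets you handle bounded measurable $f$ directly, without the paper's reduction to weighted indicators and the subsequent approximation step.

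The genuine gap is the passage from lattice times to all $t$, which you declare routine. For the generality claimed --- $f$ merely bounded and Borel --- ``bounding the oscillation of $Y_t$ on $[n,n+1]$ via path-continuity'' does not work: continuity of the particle paths says nothing about $t\mapsto f(X^u_t)$ when, say, $f=\mathbf{1}_B$ with $B$ of positive Lebesgue measure and empty interior, and particles enter and leave $B$ on dense time sets. The fallback of running the argument along $\delta\N$ for every $\delta>0$ is also insufficient: it yields almost sure convergence along countably many lattices but gives no control of $Y_t$ between lattice points. This is exactly why the paper needs its Part III: for $f=e^{-\beta|x|}\mathbf{1}_B$ it bounds $U_t$, $t\in[n\delta,(n+1)\delta]$, from below by a ``frozen'' sum counting only those particles at time $n\delta$ whose entire progeny stays in an enlarged set $B^{\eps}$ throughout $[0,\delta]$, runs a second Borel--Cantelli argument for these sums, lets $\delta,\eps\downarrow 0$, and then gets the matching upper bound by applying the liminf bound to $B^c$ and using $M_t\to M_\infty$. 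Some interpolation device of this kind (or an oscillation estimate valid for a restricted class of $f$, extended afterwards by approximation in $\pi$-measure) is indispensable; as written, your argument proves the theorem only along lattice times.
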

 
One can observe that taking $f(\cdot) \equiv 1$ in Theorem \ref{new_SLLN} would give Lemma \ref{E_N} 
and an even stronger result than given in Theorem \ref{N_as}. 
However, our proof of Theorem \ref{new_SLLN} relies on Theorem  \ref{N_as}, which in turn relies on Lemma \ref{E_N}. 
Thus, it will be necessary that we prove results in the order presented above.

The rest of this article is arranged as follows. In Section 2 we recall some basic facts regarding the local times. We also introduce a Radon-Nikodym derivative 
that puts a drift towards the origin onto a Brownian motion. This will be useful in the subsequent analysis of the model.
In Section 3, we recall some standard techniques for branching processes including spines and additive martingales.
Section 4 is devoted to the proofs of Lemmas \ref{E_N} and \ref{E_Nlambda}.
We will prove Theorem \ref{N_as} in Section 5, making use of the additive martingale $(M_t)_{t \geq 0}$ mentioned above. 
Section 6 contains the proofs of Theorem \ref{Nlambda_as}, Corollary \ref{Th_rightmost} and Lemma \ref{log_P}. 
Finally, in Section 7 we give the proof of Theorem \ref{new_SLLN}, this being largely based on extending the results found in Engl\"ander, Harris \& Kyprianou
\cite{17}.
\section{Single-particle results}
Basic information about local times and the excursion theory can be found in many 
textbooks on Brownian motion (for example, see \cite{32}). Also a good introduction is given in 
the paper of C. Rogers \cite{20}. Let us recall a few basic facts.

Suppose $(X_t)_{t \geq 0}$ is a standard Brownian motion on some probability space under probability measure $ \mathbb{P}$. 
Let $(L_t)_{t \geq 0}$ be its local time at $0$. Then $(L_t)_{t \geq 0}$ satisfies
\[
L_t = \lim_{ \epsilon \to 0} \frac{1}{2 \epsilon} \int_0^t \mathbf{1}_{ \{ X_s \in (- \epsilon , \epsilon ) \} } \mathrm{d}s 
\]
for every $t \geq 0$.
The next famous result is Tanaka's formula:
\[
|X_t| = \int_0^t sgn(X_s) \mathrm{d}X_s + L_t \text{,}
\]
where
\begin{equation*}
sgn(x) = 
\left\{ \begin{array}{rl}
1 & \text{ if } x > 0 \\
-1 & \text{ if } x \leq 0
\end{array} \right.
\end{equation*}
In a non-rigorous way this can be thought of as It\^o's formula applied to $f(x) = |x|$, 
where $f'(x) = sgn(x)$, $f''(x) = 2 \delta_0(x)$ (where $ \delta_0$ is the Dirac 
delta function). Then one can think of $L_t$ as $ \int_0^t \delta_0(X_s) \mathrm{d}s$.

Another useful result is the following theorem due to L\'evy.
\begin{Theorem}[L\'evy]
\label{Levy_Theorem}
Let $(S_t)_{t \geq 0}$ be the running supremum of $X$. That is, $S_t = \sup_{0 \leq s \leq t} X_s$. Then 
\[
(S_t, S_t - X_t)_{t \geq 0} \stackrel{d}{=} (L_t, |X_t|)_{t \geq 0}
\] 
\end{Theorem}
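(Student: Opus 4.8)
The plan is to realise both two–dimensional processes as images of a single Brownian motion under the deterministic Skorokhod reflection map, and then invoke uniqueness. Recall the Skorokhod lemma: for any continuous $y:[0,\infty)\to\R$ with $y_0=0$ there is a unique pair $(z,a)$ of continuous functions with $z_t\ge 0$, $z_t=y_t+a_t$, $a_0=0$, $a$ nondecreasing, and $a$ increasing only on $\{t:z_t=0\}$ (that is, $\int_0^\infty \mathbf{1}_{\{z_s>0\}}\,\mathrm{d}a_s=0$); explicitly $a_t=\sup_{s\le t}(-y_s)$ and $z_t=y_t+\sup_{s\le t}(-y_s)$. Write $\Gamma(y)=(z,a)$ for this map; $\Gamma$ is Lipschitz for the uniform norm on compact time intervals, hence Borel measurable on $C([0,\infty))$. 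Applying this with $y=-X$, the explicit formulas give $\Gamma(-X)=(S-X,\,S)$ with $S_t=\sup_{s\le t}X_s$ (the flatness condition on $a=S$ being automatic), so $(S_t-X_t,\,S_t)_{t\ge 0}=\Gamma(-X)$.

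Next I would identify $(|X|,L)$ as $\Gamma$ of another Brownian motion. By Tanaka's formula, $|X_t|=\beta_t+L_t$ where $\beta_t:=\int_0^t sgn(X_s)\,\mathrm{d}X_s$. The process $\beta$ is a continuous local martingale with $\langle\beta\rangle_t=\int_0^t sgn(X_s)^2\,\mathrm{d}s=t$, so by Lévy's characterisation it is a standard Brownian motion started at $0$. Moreover $|X|\ge 0$, the process $L$ is nondecreasing with $L_0=0$, and, by the defining support property of Brownian local time, $L$ grows only on $\{t:X_t=0\}=\{t:|X_t|=0\}$, so $\int_0^\infty\mathbf{1}_{\{|X_s|>0\}}\,\mathrm{d}L_s=0$. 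Hence $(|X|,L)$ solves the Skorokhod problem for the path $\beta$, and uniqueness gives $\Gamma(\beta)=(|X|,L)$.

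Since $-X$ and $\beta$ are both standard Brownian motions on $[0,\infty)$ started at $0$, they have the same law as random elements of $C([0,\infty))$; pushing forward by the measurable map $\Gamma$ yields $(S_t-X_t,\,S_t)_{t\ge 0}=\Gamma(-X)\stackrel{d}{=}\Gamma(\beta)=(|X_t|,L_t)_{t\ge 0}$, and swapping the two coordinates gives $(S_t,\,S_t-X_t)_{t\ge 0}\stackrel{d}{=}(L_t,\,|X_t|)_{t\ge 0}$, as claimed.

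The main obstacle is the content packed into the second step: verifying that $\beta$ genuinely is a Brownian motion (Lévy's characterisation via quadratic variation) and that $L$ satisfies the flatness condition of the Skorokhod problem — both are standard properties of Brownian local time, but this is where all the real work lies. A secondary technical point is the uniqueness half of the Skorokhod lemma, together with the measurability (indeed continuity) of $\Gamma$, which is what lets one transfer the distributional identity. An alternative, stochastic-calculus-free route would approximate $X$ by a rescaled simple random walk, match the discrete analogues of the two pairs by a bijective path transformation (reflecting excursions), and pass to the limit via Donsker's theorem; this avoids Tanaka's formula but demands a careful weak-convergence argument for the local-time functional.
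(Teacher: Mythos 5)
Your argument is correct: realising $(S-X,S)$ and $(|X|,L)$ as the image under the Skorokhod reflection map of the two standard Brownian motions $-X$ and $\beta_t=\int_0^t sgn(X_s)\,\mathrm{d}X_s$ (the latter via Tanaka and L\'evy's characterisation), and then using uniqueness of the Skorokhod decomposition together with measurability of the map, is exactly the classical proof of L\'evy's theorem as given, e.g., in Revuz--Yor. The paper itself offers no proof of this statement --- it quotes it as a standard single-particle fact with references to the textbook literature --- so your write-up supplies precisely the standard argument that the paper implicitly relies on, and there is nothing to correct.
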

From Theroem \ref{Levy_Theorem} and the Reflection Principle it follows that $ \forall t \geq 0$ 
\[
L_t \stackrel{d}{=} S_t \stackrel{d}{=} |X_t| \stackrel{d}{=} |N(0,t)| \text{.}
\]
It also follows that $(Z_t)_{t \geq 0} := (|X_t| - L_t)_{t \geq 0} = (\int_0^t sgn(X_s) \mathrm{d}X_s)_{t \geq 0}$ is a 
standard Brownian motion under $\mathbb{P}$, hence for any $ \gamma \in \mathbb{R}$
\[
\exp \Big\{ \gamma \big( |X_t| - L_t \big) - \dfrac{1}{2} \gamma^2 t \Big\} 
= \exp \Big\{ \gamma Z_t - \dfrac{1}{2} \gamma^2 t \Big\} \text{ ,} \qquad t \geq 0
\]
is a martingale. And more generally, for $ \gamma (\cdot)$ a smooth path we have the Girsanov 
martingale 
\begin{align}
\label{M_tilde_0}
W_t = &\exp \Big\{ \int_0^t \gamma(s) \mathrm{d} Z_s - \dfrac{1}{2} \int_0^t 
\gamma^2(s) \mathrm{d}s \Big\} \nonumber \\
\stackrel{\text{Tanaka}}{=} \ &\exp \Big\{ \int_0^t \gamma(s) sgn(X_s) \mathrm{d}X_s 
- \dfrac{1}{2} \int_0^t \gamma^2(s) \mathrm{d}s \Big\}
\end{align}
Used as the Radon-Nikodym derivative it puts the instantaneous drift $ sgn(X_t) \gamma(t)$ on the process $(X_t)_{t \geq 0}$. 
Let us restrict ourselves to the case $ \gamma( \cdot) \equiv - \gamma < 0$ so that $W$ puts the 
constant drift $ \gamma$ towards the origin on $(X_t)_{t \geq 0}$. The following result is taken from \cite{15}.
\begin{Proposition} 
\label{prop_drift_to_0}
Let $ \mathbb{Q}$ be the probability measure defined as
\[
\dfrac{\mathrm{d} \mathbb{Q}}{\mathrm{d} \mathbb{P}} \Big\vert_{ \hat{\mathcal{F}}_t} = 
\exp \Big\{ - \gamma \big( |X_t| - L_t \big) - \dfrac{1}{2} \gamma^2 t \Big\} \text{ ,} \qquad t \geq 0 \text{,}
\]
where $(\hat{\mathcal{F}}_t)_{t \geq 0}$ is the natural filtration of $(X_t)_{t \geq 0}$. 
Then under $ \mathbb{Q}$, $(X_t)_{t \geq 0}$ has the transition density
\[
p(t;x,y) = \dfrac{1}{2 \sqrt{2 \pi t}} \exp \Big( \gamma(|x|+|y|) - \frac{\gamma^2}{2}t 
- \frac{(x-y)^2}{2t} \Big) + \dfrac{\gamma}{4} \text{Erfc} \Big( \dfrac{|x|+|y|- \gamma t}{\sqrt{2t}}
\Big)
\]
with respect to the speed measure 
\[
m(\mathrm{d}y) = 2e^{-2 \gamma |y|} \mathrm{d}y \text{,}
\]
so that
\begin{equation}
\label{q_new}
\mathbb{Q}^x \big( X_t \in A \big) = \int_A p(t;x,y) m(\mathrm{d}y) \text{.}
\end{equation}
Here Erfc$(x) = \frac{2}{\sqrt{ \pi}} \int_x^{\infty} e^{-u^2} \mathrm{d}u
\sim \frac{1}{x \sqrt{ \pi}}e^{-x^2}$ as $x \to \infty$.

It also has the stationary probability measure
\begin{equation}
\label{pi_new}
\pi( \mathrm{d}x) = \gamma e^{- 2 \gamma |x|} \mathrm{d}x \text{.}
\end{equation}
\end{Proposition}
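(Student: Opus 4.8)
The plan is to read off the law of $X_t$ under $\Q^x$ directly from the change of measure and then divide by the speed measure. Since $|X_t|=|y|$ on $\{X_t=y\}$, the definition gives (up to the normalising constant $\E^x_{\Prob}\big[\tfrac{\diffd\Q}{\diffd\Prob}\big\vert_{\hat\F_t}\big]=e^{-\gamma|x|}$, which must be divided out so that $\Q^x$ is a probability measure compatible with the normalisation of $m$; equivalently one works with the mean-one martingale $\exp\{-\gamma(|X_t|-L_t-|x|)-\tfrac12\gamma^2 t\}$)
\[
\Q^x\big(X_t\in\diffd y\big)=e^{\gamma|x|}\,e^{-\gamma|y|-\frac12\gamma^2 t}\,\E^x_{\Prob}\!\big[e^{\gamma L_t};\,X_t\in\diffd y\big].
\]
Everything thus reduces to the joint law of $(X_t,L_t)$ under $\Prob^x$, which is classical and follows from L\'evy's Theorem~\ref{Levy_Theorem} and the reflection principle (for $x=0$) together with the strong Markov property at the hitting time of $0$: for $\ell>0$,
\[
\Prob^x\big(X_t\in\diffd y,\,L_t\in\diffd\ell\big)=\frac{|x|+|y|+\ell}{\sqrt{2\pi t^3}}\exp\!\Big(-\frac{(|x|+|y|+\ell)^2}{2t}\Big)\,\diffd\ell\,\diffd y,
\]
with, in addition, an atom $\Prob^x(X_t\in\diffd y,\,L_t=0)=\tfrac{1}{\sqrt{2\pi t}}\big(e^{-(x-y)^2/2t}-e^{-(|x|+|y|)^2/2t}\big)\,\diffd y$ present precisely when $xy>0$ (when $xy\le0$ the path must hit $0$, so $L_t>0$ and there is no atom; note also $(|x|+|y|)^2=(x-y)^2$ in that case).

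Writing $a:=|x|+|y|$, the only integral to evaluate is $I:=\int_0^\infty e^{\gamma\ell}\,\tfrac{a+\ell}{\sqrt{2\pi t^3}}\,e^{-(a+\ell)^2/2t}\,\diffd\ell$. Substituting $u=a+\ell$, completing the square via $\gamma u-\tfrac{u^2}{2t}=-\tfrac{(u-\gamma t)^2}{2t}+\tfrac{\gamma^2 t}{2}$, and writing $u=(u-\gamma t)+\gamma t$ splits $I$ into an elementary antiderivative plus the Gaussian tail $\int_c^\infty e^{-w^2/2t}\diffd w=\tfrac12\sqrt{2\pi t}\,\mathrm{Erfc}(c/\sqrt{2t})$, giving the tidy identity
\[
I=\frac{1}{\sqrt{2\pi t}}\,e^{-a^2/2t}+\frac{\gamma}{2}\,e^{-\gamma a+\frac12\gamma^2 t}\,\mathrm{Erfc}\!\Big(\frac{a-\gamma t}{\sqrt{2t}}\Big).
\]
Now add the atom term: when $xy>0$ the summand $-\tfrac{1}{\sqrt{2\pi t}}e^{-a^2/2t}$ from the atom cancels the first term of $I$; when $xy\le0$ there is no atom but $a^2=(x-y)^2$, so either way one gets $\tfrac{1}{\sqrt{2\pi t}}e^{-(x-y)^2/2t}+\tfrac{\gamma}{2}e^{-\gamma a+\frac12\gamma^2 t}\mathrm{Erfc}(\cdots)$. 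Multiplying by the prefactor $e^{-\gamma|y|-\frac12\gamma^2 t}$ and by $e^{\gamma|x|}$, then dividing by $m(\diffd y)/\diffd y=2e^{-2\gamma|y|}$, all the $e^{\pm2\gamma|y|}$ factors collapse and one is left with exactly
\[
p(t;x,y)=\frac{1}{2\sqrt{2\pi t}}\exp\!\Big(\gamma(|x|+|y|)-\frac{\gamma^2}{2}t-\frac{(x-y)^2}{2t}\Big)+\frac{\gamma}{4}\,\mathrm{Erfc}\!\Big(\frac{|x|+|y|-\gamma t}{\sqrt{2t}}\Big).
\]
The stationary measure then follows immediately by letting $t\to\infty$: the first term vanishes while $\mathrm{Erfc}\big((|x|+|y|-\gamma t)/\sqrt{2t}\big)\to\mathrm{Erfc}(-\infty)=2$, so $p(t;x,y)\to\gamma/2$ and hence $p(t;x,y)\,m(\diffd y)\to\gamma e^{-2\gamma|y|}\,\diffd y=\pi(\diffd y)$; alternatively $\pi$ is seen to be invariant from the scale function $e^{2\gamma|x|}$ and speed measure $m$ of the diffusion with generator $\tfrac12 f''-\gamma\,\mathrm{sgn}(x)f'$.

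I expect the only genuine obstacle to be the bookkeeping: correctly handling the reflection/atom term and its case split $xy>0$ versus $xy\le0$, and keeping track of the initial normalisation $e^{-\gamma|x|}$ so that $\Q^x$ is a genuine probability kernel compatible with $m(\diffd y)=2e^{-2\gamma|y|}\diffd y$ (omitting it yields a non-symmetric, hence incorrect, density). The Gaussian-and-$\mathrm{Erfc}$ manipulations themselves are routine. A cleaner-looking, essentially equivalent alternative is to observe from Tanaka's formula that under $\Q$ the process $(|X_t|)_{t\ge0}$ is a Brownian motion with constant drift $-\gamma$ reflected at $0$, quote the (comparably involved) transition density of that reflected diffusion, and symmetrise over the sign of $X_t$.
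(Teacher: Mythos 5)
Your derivation is correct, and it is worth noting that the paper itself offers no proof of this Proposition: it is simply quoted from the Borodin--Salminen Handbook \cite{15}, so your argument is a legitimate self-contained substitute rather than a variant of an internal proof. Your route --- reduce $\Q^x(X_t\in\diffd y)$ to $e^{\gamma|x|}e^{-\gamma|y|-\frac12\gamma^2 t}\,\E^x\big[e^{\gamma L_t};X_t\in\diffd y\big]$, insert the joint law of $(X_t,L_t)$ with absolutely continuous part $\frac{|x|+|y|+\ell}{\sqrt{2\pi t^3}}e^{-(|x|+|y|+\ell)^2/(2t)}$ (strong Markov at the hitting time of $0$, convolution of first-passage densities) and the reflection-principle atom at $L_t=0$ when $xy>0$, then evaluate the Gaussian/Erfc integral --- checks out in every detail: the completed square gives $I=\frac{1}{\sqrt{2\pi t}}e^{-a^2/(2t)}+\frac{\gamma}{2}e^{-\gamma a+\frac12\gamma^2 t}\mathrm{Erfc}\big(\frac{a-\gamma t}{\sqrt{2t}}\big)$ with $a=|x|+|y|$, the atom merges correctly in both cases $xy>0$ and $xy\le 0$, and dividing by $2e^{-2\gamma|y|}$ yields exactly the stated $p(t;x,y)$; this is also in the spirit of the paper, which quotes the same trivariate density for a start at the origin in \eqref{joint_l_xi}. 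Your normalisation remark is a genuine and useful observation: as literally written the Radon--Nikodym density has $\Prob^x$-mean $e^{-\gamma|x|}$, so for $x\neq 0$ one must use the mean-one martingale $\exp\{-\gamma(|X_t|-L_t-|x|)-\frac12\gamma^2 t\}$, which is exactly the convention the paper uses implicitly later (e.g.\ pulling out the factor $e^{-\beta|X^u_t|}$ at time zero in Section 7), and your $p$ then integrates to one against $m$. The only step I would ask you to make explicit is the passage from the pointwise limit $p(t;x,y)\to\gamma/2$ to stationarity of $\pi$: either invoke Scheff\'e's lemma (total-variation convergence) together with Chapman--Kolmogorov, or lean on your alternative remark that $\pi$ is the normalised speed measure of the diffusion with generator $\frac12 f''-\gamma\,\mathrm{sgn}(x)f'$, which gives invariance directly.
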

\section{Spines and additive martingales}
\label{spinesection}
\subsection{Spine setup}
In this section we give a brief overview of some main spine tools. The major 
reference for this section is the work of Hardy and Harris \cite{2} where all the proofs 
and further references can be found.

We let $(\mathcal{F}_t)_{t \geq 0}$ denote the natural filtration of our branching process as described in 
the introduction. We define $\mathcal{F}_\infty := \sigma(\cup_{t \geq 0} \mathcal{F}_t)$ as usual.

Let us now extend our model by identifying an infinite line of descent which we refer to as the spine and 
which is chosen uniformly from all the possible lines of descent. It is defined in the following way. 
The initial particle of the branching process begins the spine. When it splits into two new particles, one of them 
is chosen with probability $\frac{1}{2}$ to continue the spine. This goes on in the obvious way: whenever the particle 
currently in the spine splits, one of its children is chosen uniformly at random to continue the spine.

The spine is denoted by $\xi = \{ \varnothing, \xi_1, \xi_2, \cdots\}$, where $\varnothing$ is the initial particle (both in the spine 
and in the entire branching process) and $\xi_n$ is the particle in the $(n+1)^{st}$ generation of the spine. Furthermore, at time $t \geq 0$ 
we define: 
\begin{itemize}
\item $node_t(\xi) := u \in N_t \cap \xi$ (such $u$ is necessarily unique). That is, $node_t(\xi)$ is the particle in the spine alive at time $t$.
\item $n_t := |node_t(\xi)|$. Thus $n_t$ is the number of fissions that have occured along the spine by time $t$. 
\item $\xi_t := X^u_t$ for $u \in N_t \cap \xi$. So $(\xi_t)_{t \geq 0}$ is the path of the spine.
\end{itemize}
The next important step is to define a number of filtrations of our sample 
space, which contain different information about the process.
\begin{Definition}[Filtrations]$ $
\begin{itemize}
\item $ \mathcal{F}_t$ was defined earlier. It is the filtration which 
knows everything about the particles' motion and their genealogy, but it 
knows nothing about the spine.
\item We also define 
$ \tilde{\mathcal{F}}_t := \sigma \big( \mathcal{F}_t, node_t(\xi) \big) $. 
Thus $ \tilde{\mathcal{F}}$ has all the information about the branching process 
and all the information about the spine. This will be the largest 
filtration.
\item $ \mathcal{G}_t := \sigma \big( \xi_s : 0 \leq s \leq t \big)$. 
This filtration only has information about the path of the spine process, 
but it can't tell which particle $u \in N_t$ is the spine particle at time $t$.
\item $ \tilde{\mathcal{G}}_t := \sigma \big( \mathcal{G}_t , \ ( node_s(\xi) : 
0 \leq s \leq t) \big)$. This filtration knows everything about the spine 
including which particles make up the spine, but it 
doesn't know what is happening off the spine.
\end{itemize}
\end{Definition}
Note that $ \mathcal{G}_t \subset \tilde{\mathcal{G}}_t 
\subset \tilde{\mathcal{F}}_t$ and $\mathcal{F}_t \subset \tilde{\mathcal{F}}_t$. We shall use these 
filtrations to take various conditional expectations.

We let $\tilde{P}$ be the probability measure under which the branching process is defined together with the 
spine. Hence $P = \tilde{P} \vert_{\mathcal{F}_\infty}$. We shall write $\tilde{E}$ for 
the expectation with respect to $ \tilde{P}$.

 Under $\tilde{P}$ the entire branching process (with the spine) can be described in the following way.
\begin{itemize}
\item the initial particle (the spine) moves like a Brownian motion.
\item At instantaneous rate $ \beta \delta_0(\cdot)$ it splits into two new particles. 
\item One of these particles (chosen uniformly at random) continues the spine.
That is, it continues moving as a Brownian motion and branching at rate 
$ \beta \delta_0(\cdot)$. 
\item The other particle initiates a new independent $P$-branching processes 
from the position of the split.
\end{itemize}
It is not hard to see that under $\tilde{P}$ the spine's path $(\xi_t)_{t \geq 0}$ is itself a 
Brownian motion. We denote by $(\tilde{L}_t)_{t \geq 0}$ its local time at $0$.

Also, conditional on the path of the spine, $(n_t)_{t \geq 0}$ is a 
time-inhomogeneous Poisson process (or a Cox process) with instantaneous jump rate $ \beta \delta_0( \xi_t)$. 
That is, conditional on $ \mathcal{G}_t$, $k$ splits take place along the spine by time $t$
with probability 
\[
\tilde{P}(n_t = k | \mathcal{G}_t) = 
\frac{(\beta \tilde{L}_t)^k}{k!} e^{- \beta \tilde{L}_t }
\text{.}
\]
The next result (for example, see \cite{2}) is very useful in computing expectations of various quantities
\begin{Theorem}[Many-to-One Theorem]
\label{manytoone_0}
Let $f(t) \in m \mathcal{G}_t$. In other words, $f(t)$ is 
$\mathcal{G}_t$-measurable. Suppose it has the representation 
\[
f(t) = \sum_{u \in N_t} f_u(t) \mathbf{1}_{ \{ node_t( \xi) = u \} } \text{,} 
\]
where $f_u(t) \in m \mathcal{F}_t$, then
\[
E \Big( \sum_{u \in N_t} f_u(t) \Big) = \tilde{E} \Big( f(t) 
e^{\beta \tilde{L}_t} \Big) \text{.}
\]
\end{Theorem}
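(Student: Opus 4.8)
The statement is a Many-to-One identity, and the plan is to prove it directly from the two structural facts about the extended measure $\tilde P$ recalled above, namely: (i) under $\tilde P$ the branching tree itself has law $P$, with the spine obtained by an independent fair coin-toss at each fission, so that conditionally on $\mathcal F_t$ every $u\in N_t$ is the current spine node with probability $2^{-|u|}$; and (ii) conditionally on the spine path ($\mathcal G_t$), the number $n_t$ of spine fissions is Poisson with mean $\beta\tilde L_t$, so that $\tilde E(2^{n_t}\mid\mathcal G_t)=e^{\beta\tilde L_t}$. The argument then has three short steps: pass from $P$ to $\tilde P$; re-introduce the spine on the left-hand side with a compensating weight $2^{n_t}$; and integrate out the spine fissions using (ii). Equivalently, one is exploiting that $2^{n_t}e^{-\beta\tilde L_t}$ is a mean-one $(\mathcal G_t)$-martingale, which is the change-of-measure underlying the spine construction.

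\textbf{The computation.} Since $\sum_{u\in N_t}f_u(t)\in m\mathcal F_t$ and $P=\tilde P\vert_{\mathcal F_\infty}$, we have $E\big(\sum_{u\in N_t}f_u(t)\big)=\tilde E\big(\sum_{u\in N_t}f_u(t)\big)$. On the event $\{node_t(\xi)=u\}$ one has $n_t=|u|$, hence $f(t)\,2^{n_t}=\sum_{u\in N_t}2^{|u|}f_u(t)\Ind_{\{node_t(\xi)=u\}}$, and taking $\tilde E(\,\cdot\mid\mathcal F_t)$ and using (i) gives
\[
\tilde E\big(f(t)\,2^{n_t}\,\big|\,\mathcal F_t\big)=\sum_{u\in N_t}2^{|u|}f_u(t)\,\tilde P\big(node_t(\xi)=u\mid\mathcal F_t\big)=\sum_{u\in N_t}f_u(t),
\]
so that $E\big(\sum_{u\in N_t}f_u(t)\big)=\tilde E\big(f(t)\,2^{n_t}\big)$. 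Finally, since $f(t)\in m\mathcal G_t$ and $\tilde L_t\in m\mathcal G_t$, conditioning on $\mathcal G_t$ and invoking (ii),
\[
\tilde E\big(f(t)\,2^{n_t}\big)=\tilde E\Big(f(t)\,\tilde E\big(2^{n_t}\mid\mathcal G_t\big)\Big)=\tilde E\Big(f(t)\sum_{k\ge 0}2^{k}\frac{(\beta\tilde L_t)^{k}}{k!}e^{-\beta\tilde L_t}\Big)=\tilde E\big(f(t)\,e^{\beta\tilde L_t}\big),
\]
which is the asserted identity.

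\textbf{Main obstacle.} The computation itself is bookkeeping; the one point that genuinely needs care is the first conditional expectation, i.e.\ the assertion that conditionally on the entire branching tree $\mathcal F_t$ the spine lineage is an independent uniform line of descent, giving $\tilde P(node_t(\xi)=u\mid\mathcal F_t)=2^{-|u|}$ for each $u\in N_t$. This must be extracted from the precise construction of $\tilde P$ — each fission contributing an independent factor $\tfrac12$ to the spine choice, independently of the spatial motion and of the subtrees growing off the spine — and is exactly the general framework set up in Hardy and Harris \cite{2}; once it is in place (together with the stated Poisson description of $n_t$ given $\mathcal G_t$), the proof is as above. For the identity to be read as an equality of real numbers rather than of elements of $[0,\infty]$ one should also note an integrability hypothesis (e.g.\ $f_u(t)\ge 0$, or finiteness of the right-hand side); for nonnegative integrands the displays hold verbatim by monotone convergence.
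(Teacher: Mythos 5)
Your argument is correct, and it supplies a proof that the paper itself does not give: Theorem \ref{manytoone_0} is simply quoted from Hardy and Harris \cite{2}. Your route is the standard spine derivation, and the two structural inputs you isolate are exactly the facts the paper records immediately before stating the theorem: the child continuing the spine is chosen uniformly (giving $\tilde{P}\big(node_t(\xi)=u\mid\mathcal{F}_t\big)=2^{-|u|}$ for $u\in N_t$, since under $\tilde{P}$ the $\mathcal{F}_\infty$-marginal is $P$ and each fission contributes an independent factor $\tfrac12$), and conditionally on $\mathcal{G}_t$ the spine birth count $n_t$ is Poisson with mean $\beta\tilde{L}_t$, so that $\tilde{E}\big(2^{n_t}\mid\mathcal{G}_t\big)=e^{\beta\tilde{L}_t}$. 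With these in hand your three steps (pass to $\tilde{P}$ via $P=\tilde{P}\vert_{\mathcal{F}_\infty}$; insert $2^{n_t}$, using that $n_t=|u|$ on $\{node_t(\xi)=u\}$ under the Ulam--Harris labelling with binary fission; tower over $\mathcal{G}_t$) are all valid, and pulling $f(t)$ out of the $\mathcal{G}_t$-conditional expectation is legitimate since $f(t),\tilde{L}_t\in m\mathcal{G}_t$. Compared with the reference \cite{2}, which organises the same content through projections of the spine change of measure onto the subfiltrations $\mathcal{F}_t\subset\tilde{\mathcal{F}}_t$ and $\mathcal{G}_t\subset\tilde{\mathcal{G}}_t$, your direct tower-property computation is more elementary but proves the identity in the form actually used here (branching only along the spine's local time, offspring number $2$). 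Your closing caveats are also apt: the uniform-spine-given-$\mathcal{F}_t$ property is the one point that must be extracted from the construction of $\tilde{P}$ rather than merely asserted, and reading the identity for nonnegative $f_u(t)$ (as it is applied throughout the paper) avoids any integrability issue via monotone convergence.
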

\subsection{Martingales}
Since $(\xi_t)_{t \geq 0}$ is a standard Brownian motion we can define the following $ \tilde{P}$-martingale 
with respect to the filtration $(\mathcal{G}_t)_{t \geq 0}$ 
using Proposition \ref{prop_drift_to_0}:
\begin{equation}
\label{M_beta}
\tilde{M}^{\beta}_t := e^{ - \beta |\xi_t| + \beta \tilde{L}_t - \frac{1}{2} \beta^2 t} \text{ ,} \qquad t \geq 0 \text{.}
\end{equation}
We also define the corresponding probability measure $ \tilde{Q}_{\beta}$ as 
\begin{equation}
\label{Q_beta0}
\dfrac{\mathrm{d} \tilde{Q}_{\beta}}{\mathrm{d} \tilde{P}} \Big\vert_{ \mathcal{G}_t} = \tilde{M}^{\beta}_t \text{ ,} \qquad t \geq 0 \text{.}
\end{equation}
Then under $ \tilde{Q}_{\beta}$, $(\xi_t)_{t \geq 0}$ has drift $ \beta$ towards the origin and from Proposition 
\ref{prop_drift_to_0} we know its exact transition density as well as its stationary distribution. 

Let us also define the martingale
\[
\tilde{M}_t := 2^{n_t}e^{- \beta \tilde{L}_t} \tilde{M}_t^{\beta} \qquad \text{, } t \geq 0 \text{,}
\]
which is the product of two $ \tilde{P}$-martingales, the first of which 
doubles the branching rate along the spine, and the second puts the drift of magnitude $\beta$ towards the origin. 
If we define a probability measure $ \tilde{Q}$ as
\begin{equation}
\label{Q_tilde1_}
\dfrac{\mathrm{d} \tilde{Q}}{\mathrm{d} \tilde{P}} \bigg\vert_{\tilde{\mathcal{F}}_t} = \tilde{M}_t \ \text{, } \qquad t \geq 0
\end{equation}
then under $\tilde{Q}$ the branching process has the following description: 
\begin{itemize}
\item The initial particle (the spine) moves like a Brownian motion with drift $\beta$ towards the origin. 
\item When it is at position $x$ it splits into two new particles at instantaneous rate $ 2 \beta \delta_0(x)$.
\item One of these particles (chosen uniformly at random) continues the spine. I.e. it 
continues moving as a biased random walk and branching at rate $ 2 \beta \delta_0(x)$.
\item The other particle initiates an unbiased branching process (as under $P$) from the position of the split.
\end{itemize}
Note that although \eqref{Q_tilde1_} only defines $ \tilde{Q}$ on events in 
$ \cup_{t \geq 0} \tilde{\mathcal{F}}_t$, Carath\'eodory's extension theorem tells that $ \tilde{Q}$ has a 
unique extension on $ \tilde{\mathcal{F}}_\infty := \sigma( \cup_{t \geq 0} \tilde{\mathcal{F}}_t)$ 
and thus \eqref{Q_tilde1_} implicitly defines $ \tilde{Q}$ on $ \tilde{\mathcal{F}}_\infty$.
We then define $Q := \tilde{Q}|_{ \mathcal{F}_{ \infty}}$ so that 
\begin{align}
\label{addtive_Mbeta}
\frac{\mathrm{d} Q}{\mathrm{d} P} \bigg\vert_{ \mathcal{F}_t} = M_t &:= \sum_{u \in N_t} \exp \Big\{ 
\big( - \beta |X_t^u| + L_t^u - \dfrac{1}{2} \beta^2 t \big) - \beta L_t^u \Big\} \nonumber \\
&=  \sum_{u \in N_t} \exp \Big\{ - \beta |X_t^u| - \dfrac{1}{2} \beta^2 t \Big\} \text{ ,}
\qquad t \geq 0 \text{.}
\end{align}
$(M_t)_{t \geq 0}$ will be referred to as the additive martingale.
\subsection{Convergence properties of $(M_t)_{t \geq 0}$}
The following theorem is a standard result for additive martingales in the study of branching processes.
\begin{Theorem}
\label{UI_Mbeta}
$(M_t)_{t \geq 0} $ is $P$-uniformly integrable and $M_{\infty} > 0 \ P$-almost surely.
\end{Theorem}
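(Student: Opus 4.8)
\noindent\emph{Proof strategy.} The plan is to use the spine change of measure together with the standard measure-theoretic approach to convergence of additive martingales (as in Hardy--Harris \cite{2}). First observe that $M_t>0$ for every $t$ (there is always at least one particle and each summand is strictly positive), so $1/M_t$ is a nonnegative $Q$-martingale and hence $M_t$ converges $Q$-a.s. in $[0,\infty]$; being a nonnegative $P$-martingale, $M_t$ also converges $P$-a.s. to a finite limit $M_\infty$. The classical dichotomy then says that $(M_t)_{t\ge0}$ is $P$-uniformly integrable, equivalently $E^x M_\infty=M_0$, equivalently $Q\ll P$ on $\mathcal F_\infty$, if and only if $M_t\not\to\infty$ $Q$-a.s., i.e.\ $\liminf_{t\to\infty}M_t<\infty$ $Q$-a.s. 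Thus the uniform integrability statement reduces to an a.s.\ finiteness statement under the spine measure.

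To establish it I would invoke the spine decomposition under $\tilde Q$. Recall that under $\tilde Q$ the spine $(\xi_t)$ is a Brownian motion with constant drift $\beta$ towards the origin, it branches \emph{only at the origin} (at rate $2\beta$ on the local-time scale $\tilde L$), and each off-spine child is born at the origin and then runs an independent copy of the $P$-branching process, whose additive martingale has unit mean. Writing $S_1<S_2<\cdots$ for the successive fission times along the spine, the spine decomposition (see \cite{2}) reads
\[
\tilde E_{\tilde Q}\big(M_t \,\big|\, \tilde{\mathcal G}_\infty\big)
= e^{-\beta|\xi_t|-\frac12\beta^2 t} + \sum_{j:\,S_j\le t} e^{-\frac12\beta^2 S_j}.
\]
The spine term tends to $0$ and the (nondecreasing) sum converges to $\sum_{j\ge1}e^{-\frac12\beta^2 S_j}\in[0,\infty]$; provided this series is $\tilde Q$-a.s.\ finite, conditional Fatou gives $\tilde E_{\tilde Q}(\liminf_t M_t\mid\tilde{\mathcal G}_\infty)<\infty$, hence $\liminf_t M_t<\infty$ $\tilde Q$-a.s., which is exactly the criterion above.

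The crux is therefore to show $\sum_j e^{-\frac12\beta^2 S_j}<\infty$ $\tilde Q$-a.s., i.e.\ that the fission times along the spine grow at least linearly in $j$, and this is the step I expect to need the most care. Here the recurrence of the drifted spine enters: by Tanaka's formula, under $\tilde Q$ the process $|\xi_t|$ is a Brownian motion with constant drift $-\beta$ reflected at $0$, so the Skorokhod representation gives $\tilde L_t=\sup_{0\le s\le t}(\beta s-B_s-|\xi_0|)^+$ for a $\tilde Q$-Brownian motion $B$; since $t^{-1}\sup_{s\le t}|B_s|\to0$, this yields $\tilde L_t/t\to\beta$ $\tilde Q$-a.s. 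Conditionally on the spine path, $n_t=\#\{j:S_j\le t\}$ is Poisson with mean $2\beta\tilde L_t$, so combining the linear growth of $\tilde L_t$ with a routine Poisson-tail/Borel--Cantelli estimate gives $n_t\le Ct$ for all large $t$, $\tilde Q$-a.s., i.e.\ $S_j\ge j/C$ eventually, whence $\sum_j e^{-\frac12\beta^2 S_j}\le\mathrm{const}+\sum_j e^{-\beta^2 j/(2C)}<\infty$. This local-time growth estimate is the one genuinely model-specific ingredient; everything else is standard spine bookkeeping.

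Finally, positivity of $M_\infty$ follows from a one-step branching decomposition and a $0$--$1$ argument, using the uniform integrability just proved. Put $q(x):=P^x(M_\infty=0)$. By Brownian recurrence the first fission time $\tau$ is $P^x$-a.s.\ finite, and at time $\tau$ the particle sits at $0$ and splits into two particles, each launching an independent copy of the $P^0$-process; decomposing $M_t$ over these two subtrees and letting $t\to\infty$ gives
\[
M_\infty = e^{-\frac12\beta^2\tau}\big(M_\infty^{(1)}+M_\infty^{(2)}\big),
\]
with $M_\infty^{(1)},M_\infty^{(2)}$ i.i.d., distributed as $M_\infty$ under $P^0$. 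Since $e^{-\frac12\beta^2\tau}\in(0,\infty)$ and both terms are nonnegative, $\{M_\infty=0\}=\{M_\infty^{(1)}=0\}\cap\{M_\infty^{(2)}=0\}$, so $q(x)=q(0)^2$ and in particular $q(0)\in\{0,1\}$. But uniform integrability forces $E^0 M_\infty=M_0=1>0$, so $q(0)\ne1$; hence $q(0)=0$, $q\equiv0$, and $M_\infty>0$ $P$-a.s.
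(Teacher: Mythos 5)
Your proof is correct, and for the heart of the theorem it follows the same route as the paper: reduce $P$-uniform integrability to showing that $M_t$ stays finite under the spine measure (via the Lebesgue/absolute-continuity dichotomy and the $Q$-a.s.\ convergence of $M_t$ coming from $1/M_t$), then control $E^{\tilde Q}(M_t\mid\tilde{\mathcal G}_\infty)=spine(t)+sum(t)$ by the spine decomposition, using that under $\tilde Q$ the spine is positive recurrent, $\tilde L_t/t\to\beta$, hence the fission times $S_j$ grow linearly and $\sum_j e^{-\frac{1}{2}\beta^2 S_j}<\infty$, and finish with conditional Fatou; your Skorokhod-representation justification of $\tilde L_t/t\to\beta$ and the Poisson-tail/Borel--Cantelli step are just more explicit versions of what the paper asserts. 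The one genuine difference is the positivity of $M_\infty$: the paper quotes a zero--one law from \cite{3} to get $P(M_\infty>0)\in\{0,1\}$ and then uses $E M_\infty=1$, whereas you derive the dichotomy directly by decomposing at the first fission time $\tau$ (which necessarily occurs at the origin), obtaining $M_\infty=e^{-\frac{1}{2}\beta^2\tau}\bigl(M_\infty^{(1)}+M_\infty^{(2)}\bigr)$ with i.i.d.\ copies started from $0$, whence $q(x)=q(0)^2$ and $q\equiv 0$. This variant is self-contained and exploits the special structure of the catalytic model (all births at the origin make the subtree copies exactly identically distributed), at the cost of being slightly less general than the cited zero--one law; both are perfectly adequate here.
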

\begin{proof}
Recall the following measure-theoretic result, which gives Lebesgue's decomposition 
of $Q$ into absolutely-continuous and singular parts. It can for example be found 
in the book of R. Durrett \cite{6} (Section 4.3).
\begin{Lemma}
\label{durrett_lem2_}
For events $A \in \mathcal{F}_\infty$
\[
Q(A) = \int_A \limsup_{t \to \infty} M_t \ \mathrm{d}P + Q \Big( A \cap \big\{ \limsup_{t \to \infty} M_t = \infty \big\} \Big) \text{.}
\]
\end{Lemma}
Also a standard zero-one law, which can be found, for example,  in \cite{3} (see Lemma 3 and the proof of Theorem 2 that follows it) 
tells that $P(M_{\infty} > 0) \in \{ 0, 1 \}$.
Thus to prove Theorem \ref{UI_Mbeta} it is sufficient to show that 
\begin{equation}
\label{sllaaasd}
\limsup_{t \to \infty} M_t < \infty \quad Q \text{-a.s.}
\end{equation}
Let us consider the spine decomposition of $M_t$, another useful technique which can be found in \cite{2}:
\[
E^{\tilde{Q}} \Big( M_t \big\vert \tilde{ \mathcal{G}}_{ \infty} \Big) = \exp \Big\{ - \beta | \xi_t | - \frac{1}{2} \beta^2 t \Big\} + 
\sum_{u < node_t(\xi)} \exp \Big\{ - \beta | \xi_{S_u} | - \frac{1}{2} \beta^2 S_u \Big\} \text{,}
\]
where we refer to the first term as $spine(t)$ and the second term as $sum(t)$.

Recall that under $ \tilde{Q} $, $(\xi_t)_{t \geq 0}$ is a Brownian Motion with drift 
$ \beta$ towards the origin and $(|\xi_t| - \tilde{L}_t)_{t \geq 0}$ is a Brownian motion with drift $- \beta$. 
Thus $ t^{-1} \xi_t \to 0$ and $t^{-1} \tilde{L}_t \to \beta \ \tilde{Q}$-a.s. Also $spine(t) \leq 1$ and 
\begin{equation}
\label{kklllkk}
sum(t) \leq \sum_{u < node_t(\xi)} e^{- \frac{1}{2} \beta^2 S_u} \leq \sum_{n = 1}^{\infty} e^{- \frac{1}{2} \beta^2 S_n} \text{,}
\end{equation}
where $S_n$ is the $n^{th}$ birth on the spine. 
The birth process along the spine $(n_t)_{t \geq 0} $ conditional on the path of the spine is a time-inhomogeneous Poisson process 
(or a Cox process) with cummulative jump rate $2 \beta \tilde{L}_t$. Hence, $\tilde{Q}$-almost surely, 
$n_t \sim 2 \beta \tilde{L}_t \sim 2 \beta^2 t$, and so $S_n \sim (2 \beta^2)^{-1} n$. 

Thus there exists some  $\tilde{Q}$-a.s. finite random variable $C > 0$ such that $S_n \geq C n$ for all $n$. Substituting this into \eqref{kklllkk} we get 
\[
sum(t) \leq \sum_{n=1}^{\infty} e^{-\frac{1}{2} \beta^2 C n} \text{.}
\]
Therefore $sum(t)$ is bounded by some $ \tilde{Q}$-a.s. finite random variable. We deduce that 
\[
\limsup_{t \to \infty} E^{\tilde{Q}} \Big( M_t \big\vert \tilde{ \mathcal{G}}_{ \infty} \Big) 
= \limsup_{t \to \infty} \Big( spine(t) + sum(t) \Big) < \infty \quad \tilde{Q} \text{-a.s.}  
\]
So by Fatou's lemma, $\tilde{Q}$-almost surely, 
\begin{align*}
E^{\tilde{Q}} \Big( \liminf_{t \to \infty} M_t \big\vert \tilde{\mathcal{G}}_\infty \Big) \leq
\liminf_{t \to \infty} E^{\tilde{Q}} \Big( M_t \big\vert \tilde{\mathcal{G}}_\infty \Big) 
\leq \limsup_{t \to \infty} E^{\tilde{Q}} \Big( M_t \big\vert \tilde{\mathcal{G}}_\infty \Big)
< \infty \text{.}
\end{align*}
Then $\liminf_{t \to \infty} M_t < \infty \ \tilde{Q}$-a.s. and hence also $Q$-a.s.
Since $1/M_t$ is a positive $Q$-supermartingale, it must converge $Q$-a.s., hence
\[
\limsup_{t \to \infty} M_t = \liminf_{t \to \infty} M_t < \infty \qquad Q \text{-a.s.} 
\]
completing the proof of the theorem.
\end{proof}
The next theorem is essential in the proof of the Strong Law of Large Numbers in the last section. 
\begin{Theorem}
\label{Lp}
For $p \in (1, 2), \ (M_t)_{t \geq 0}$ is $L^p$-convergent. 
\end{Theorem}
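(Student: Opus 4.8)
The plan is to prove that $(M_t)_{t\ge0}$ is bounded in $L^p$ for each fixed $p\in(1,2)$; since Theorem~\ref{UI_Mbeta} already gives $M_t\to M_\infty$ $P$-a.s., $L^p$-boundedness of the nonnegative martingale $(M_t)$ then yields $L^p$-convergence, via Doob's $L^p$ maximal inequality (which bounds $E(\sup_{s}M_s^p)$) followed by dominated convergence. To estimate $E(M_t^p)$ I would use $\mathrm dQ/\mathrm dP|_{\mathcal F_t}=M_t$ to write, with $q:=p-1\in(0,1)$,
\[
E\big(M_t^p\big)=E^{Q}\big(M_t^{q}\big)=E^{\tilde Q}\big(M_t^{q}\big),
\]
(the last equality because $Q=\tilde Q|_{\mathcal F_\infty}$ and $M_t^q$ is $\mathcal F_t$-measurable), and then apply conditional Jensen's inequality with respect to $\tilde{\mathcal G}_\infty$, using concavity of $x\mapsto x^{q}$, to obtain
\[
E^{\tilde Q}\big(M_t^{q}\big)\le E^{\tilde Q}\Big(\big(E^{\tilde Q}(M_t\mid\tilde{\mathcal G}_\infty)\big)^{q}\Big)=E^{\tilde Q}\Big(\big(spine(t)+sum(t)\big)^{q}\Big),
\]
where $spine(t)$ and $sum(t)$ are the two terms of the spine decomposition already recorded in the proof of Theorem~\ref{UI_Mbeta}.

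Since $q\in(0,1)$, the map $x\mapsto x^q$ is subadditive on $[0,\infty)$, so $(spine(t)+sum(t))^q\le spine(t)^q+sum(t)^q$, and $spine(t)=e^{-\beta|\xi_t|-\frac12\beta^2t}\le1$ contributes at most $1$. It therefore remains to bound $E^{\tilde Q}(sum(t)^q)$ uniformly in $t$. Because all branching occurs at the origin, $\xi_{S_n}=0$ at every spine birth time $S_n$, so $sum(t)=\sum_{n=1}^{n_t}e^{-\frac12\beta^2S_n}$; applying subadditivity once more and then monotone convergence,
\[
E^{\tilde Q}\big(sum(t)^q\big)\le\sum_{n=1}^{\infty}E^{\tilde Q}\big(e^{-\theta S_n}\big),\qquad\theta:=\tfrac{q}{2}\beta^2>0,
\]
so the whole statement reduces to showing that $E^{\tilde Q}(e^{-\theta S_n})$ decays geometrically in $n$ for each fixed $\theta>0$.

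For this key estimate I would pass to the local-time clock of the spine. Under $\tilde Q$ the spine is a point-recurrent Brownian motion with drift $\beta$ towards $0$, so its inverse local time $\sigma_\ell:=\inf\{t:\tilde L_t>\ell\}$ is a subordinator with Laplace exponent $\phi$, that is $E^{\tilde Q}(e^{-\theta\sigma_\ell})=e^{-\ell\phi(\theta)}$, and $0<\phi(\theta)<\infty$ for $\theta>0$ since $0<\sigma_\ell<\infty$ $\tilde Q$-a.s.\ for $\ell>0$. Conditionally on the spine path the births along the spine form a Cox process of intensity $2\beta\,\mathrm d\tilde L_s$, so its push-forward under the local-time map $t\mapsto\tilde L_t$ is a homogeneous rate-$2\beta$ Poisson process on $[0,\infty)$ whose law does not depend on the path; hence the accumulated local time $\ell_n:=\tilde L_{S_n}$ at the $n$-th spine birth is $\mathrm{Gamma}(n,2\beta)$-distributed and independent of $\mathcal G_\infty$. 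Since $S_n=\sigma_{\ell_n}$ $\tilde Q$-a.s.\ (as $\ell_n$ has a density), this gives the exact identity
\[
E^{\tilde Q}\big(e^{-\theta S_n}\big)=E\big(e^{-\ell_n\phi(\theta)}\big)=\Big(\frac{2\beta}{2\beta+\phi(\theta)}\Big)^{n}=:\rho(\theta)^n,
\]
with $\rho(\theta)\in(0,1)$. Summing, $E^{\tilde Q}(sum(t)^q)\le\rho(\theta)/(1-\rho(\theta))$, whence $\sup_{t\ge0}E(M_t^p)\le1+\rho(\theta)/(1-\rho(\theta))<\infty$, and the theorem follows.

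The step I expect to require the most care is this last one: justifying rigorously that, in the local-time clock, the birth point process is genuinely independent of the spine path, that $S_n=\sigma_{\ell_n}$ almost surely, and that the inverse local time of Brownian motion with drift towards the origin is a non-degenerate subordinator with $\phi(\theta)>0$. If one prefers to avoid excursion theory, an alternative route to the geometric bound is to split on $\{S_n\le\eps n\}\cup\{S_n>\eps n\}$ for a small $\eps>0$: on the second event $e^{-\theta S_n}\le e^{-\theta\eps n}$, while $P^{\tilde Q}(S_n\le\eps n)\le E^{\tilde Q}\big(P(\mathrm{Poisson}(2\beta\tilde L_{\eps n})\ge n\mid\mathcal G_\infty)\big)$ can be controlled, for $\eps$ small, by the Poisson tail together with the linear growth $\tilde L_t/t\to\beta$ and an exponential upper-tail bound for $\tilde L_{\eps n}$ under $\tilde Q$; this is more computational but elementary. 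The remaining ingredients — the conditional Jensen step, subadditivity of $x\mapsto x^q$, and Doob's $L^p$ inequality — are routine.
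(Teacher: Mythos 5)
Your argument is correct, and its skeleton is exactly the paper's: reduce to $\sup_t E(M_t^p)<\infty$ (the paper simply asserts this reduction; your Doob-plus-dominated-convergence justification is the standard one), write $E(M_t^p)=E^{\tilde{Q}}(M_t^{p-1})$, apply conditional Jensen with respect to $\tilde{\mathcal{G}}_\infty$, invoke the spine decomposition, use subadditivity of $x\mapsto x^{p-1}$, and bound the spine term by $1$. Where you genuinely depart from the paper is in the key estimate for the sum term. The paper keeps the factor $e^{-\beta(p-1)|\xi_{S_u}|}$, rewrites the expected sum over spine births as an integral against the birth intensity along the spine, bounds the local-time factor by $\frac{2}{\epsilon}e^{\beta\epsilon \tilde{L}_s}$, and then evaluates the resulting exponential moment of $\tilde{L}_s-|\xi_s|$ with the Girsanov martingale, which works provided $\epsilon<\sqrt{p}-1$; this stays entirely within the change-of-measure toolkit already set up in Sections 2--3 and does not use that branching is concentrated at a point. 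You instead exploit the catalytic structure: since $\xi_{S_n}=0$, you reduce to geometric decay of $E^{\tilde{Q}}(e^{-\theta S_n})$, and obtain the exact identity $E^{\tilde{Q}}(e^{-\theta S_n})=\big(\tfrac{2\beta}{2\beta+\phi(\theta)}\big)^n$ by passing to the local-time clock, where under $\tilde{Q}$ the birth local times are $\mathrm{Gamma}(n,2\beta)$ and independent of the spine path, and the inverse local time is a subordinator with Laplace exponent $\phi(\theta)>0$. This is cleaner and gives an explicit uniform bound $1+\rho/(1-\rho)$, at the price of importing the excursion-theoretic facts you flag (recurrence of the spine under $\tilde{Q}$ so that $\tilde{L}_\infty=\infty$, the subordinator property, the Cox-to-Poisson transfer, and $S_n=\sigma_{\ell_n}$ a.s.); all of these are standard and correctly invoked, and the last one is not even essential, since for an upper bound on $e^{-\theta S_n}$ it suffices that $S_n\geq\sigma_{\ell_n-}$ and $E^{\tilde{Q}}(e^{-\theta\sigma_{\ell-}})=e^{-\ell\phi(\theta)}$ as well. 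Your fallback route via the Poisson tail on $\{S_n\leq\epsilon n\}$ would also work and is closer in spirit to the almost-sure estimate $S_n\sim(2\beta^2)^{-1}n$ used in the paper's proof of Theorem \ref{UI_Mbeta}, though it is more computational than either completed argument.
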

\begin{proof}
We use similar proof as found in \cite{2}. It is sufficient to show that $E( M_t^p)$ is bounded in $t$. 
\begin{align*}
E \Big( M_t^p \Big) &= E \Big( M_t^{p - 1} 
M_t \Big) = E^Q \Big( M_t^{p - 1} \Big) = E^{ \tilde{Q}} \Big( M_t^{p - 1} \Big) \\ 
&= E^{ \tilde{Q}} \Big( E^{ \tilde{Q}} \Big( M_t^{p - 1} | \tilde{ \mathcal{G}}_{ \infty} \Big) \Big) 
\leq E^{ \tilde{Q}} \Big( \Big( E^{ \tilde{Q}} \big( M_t | \tilde{ \mathcal{G}}_{ \infty} \big) \Big)^{p-1} \Big)
\end{align*}
by Jensen's inequality. Since for $a, b \geq 0$ and $q \in (0, 1)$, $(a + b)^q \leq a^q + b^q$, 
we see that 
\begin{align*}
\Big( E^{ \tilde{Q}} \big( M_t | \tilde{ \mathcal{G}}_{ \infty} \big) \Big)^{p-1} &= \big( spine(t) + sum(t)  \big)^{p-1} \\
&\leq e^{- \frac{\beta^2}{2}(p - 1)t - \beta (p - 1) | \xi_t|} + 
\sum_{u < node_t(\xi)} e^{- \frac{\beta^2}{2}(p - 1)S_u - \beta (p - 1) | \xi_{S_u}|} 
\end{align*}
And hence
\[
E \Big( M_t^p \Big) \leq E^{ \tilde{Q}} \Big( e^{- \frac{\beta^2}{2}(p - 1)t - \beta (p - 1) | \xi_t|} \Big) + 
E^{ \tilde{Q}} \Big( \sum_{u < node_t(\xi)} e^{- \frac{\beta^2}{2}(p - 1)S_u - \beta (p - 1) | \xi_{S_u}|} \Big)
\]
The first expectation is bounded by $1$. The second one satisfies for $\epsilon > 0$ small enough 
\begin{align*}
E^{ \tilde{Q}} \Big( \sum_{u < node_t(\xi)} e^{- \frac{\beta^2}{2}(p - 1)S_u - \beta (p - 1) | \xi_{S_u}|} \Big) 
&= E^{ \tilde{Q}} \Big( \int_0^t 2 \beta \tilde{L}_s e^{- \frac{\beta^2}{2}(p - 1)s - \beta (p - 1) | \xi_{s}|} \mathrm{d}s \Big) \\ 
&= \int_0^t E^{ \tilde{Q}} \Big( 2 \beta \tilde{L}_s e^{- \frac{\beta^2}{2}(p - 1)s - \beta (p - 1) | \xi_{s}|} \Big) \mathrm{d}s \\ 
&\leq \int_0^t E^{ \tilde{Q}} \Big( \frac{2}{\epsilon} e^{\beta \epsilon \tilde{L}_s - \frac{\beta^2}{2}(p - 1)s - \beta (p - 1) | \xi_{s}|} \Big) \mathrm{d}s \\ 
&\leq \int_0^t E^{ \tilde{Q}} \Big( \frac{2}{\epsilon} e^{-\beta \tilde{L}_s + \beta |\xi_s| + \frac{\beta^2}{2}s}
e^{(1 + \epsilon) \beta ( \tilde{L}_s - |\xi_s| ) - \frac{\beta^2}{2} ps} \Big) \mathrm{d}s \\ 
&= \frac{2}{\epsilon} \int_0^t \tilde{E} \Big( e^{(1 + \epsilon) \beta ( \tilde{L}_s - |\xi_s| ) - \frac{\beta^2(1 + \epsilon)^2}{2} s} 
\Big) e^{\frac{\beta^2(1 + \epsilon)^2}{2} s - \frac{\beta^2}{2} ps} \mathrm{d}s \\ 
&= \frac{2}{\epsilon} \int_0^t e^{\frac{\beta^2(1 + \epsilon)^2}{2} s - \frac{\beta^2}{2} ps} \mathrm{d}s \text{,}
\end{align*}
which is bounded for $ \epsilon$ chosen sufficiently small ($ \epsilon < \sqrt{p} - 1$).
\end{proof}
\section{Expected population growth}
\subsection{Asymptotic expected growth of $|N_t|$}
We prove Lemma \ref{E_N} using the Many-to-One Theorem. 
\begin{proof}[Proof of Lemma \ref{E_N}]
From Theorem \ref{manytoone_0} we have 
\[
E \Big( |N_t| \Big) = E \Big( \sum_{u \in N_t} 1 \Big) = \tilde{E} \Big( e^{\beta \tilde{L}_t} \Big) \text{.}
\]
Using the fact that $\tilde{L}_t \stackrel{d}{=} |N(0, t)|$ it is then easy to check that 
\[
\tilde{E}( e^{\beta \tilde{L}_t}) = 2 \Phi ( \beta \sqrt{t}) e^{ \frac{\beta^2}{2}t} \text{,} 
\]
where $ \Phi(x) = \mathbb{P} (N(0, 1) \leq x)$.

We can also find a good estimate of $\tilde{E}( e^{\beta \tilde{L}_t})$ using the change of measure 
from \eqref{Q_beta0}, which is instructive for our purposes:
\[
\tilde{E} \Big( e^{\beta \tilde{L}_t} \Big) = \tilde{E} \Big( e^{\beta \tilde{L}_t - 
\beta |\xi_t| - \frac{1}{2} \beta^2 t}e^{ \beta |\xi_t| + \frac{1}{2} \beta^2 t} \Big) 
= \tilde{E} \Big( \tilde{M}_t^{\beta} e^{ \beta |\xi_t| + \frac{1}{2} \beta^2 t} \Big) = 
E^{\tilde{Q}_{\beta}} \Big( e^{ \beta |\xi_t|} \Big) e^{\frac{1}{2} \beta^2 t} \text{.}
\]
Then, using the stationary measure, from \eqref{pi_new} we have 
\[
E^{\tilde{Q}_{\beta}} \Big( e^{ \beta |\xi_t|} \Big) \to \int_{- \infty}^{\infty} e^{\beta |x|} \pi( \mathrm{d}x) 
= \int_{- \infty}^{\infty} e^{\beta |x|} \beta e^{-2 \beta |x|} \mathrm{d}x = 2 \text{.}
\]
Thus
\[
E \Big( |N_t| \Big) \sim 2 e^{\frac{\beta^2}{2}t} \text{,}
\]
\end{proof}
\subsection{Asymptotic expected behaviour of $N_t^{\lambda t}$}
Let us now prove that $ \lim_{t \to \infty} \frac{1}{t} \log E (|N_t^{\lambda t}|) = \Delta_{\lambda}$, where 
$ N_t^{ \lambda t} = \{ u \in N_t : X_t^u > \lambda t \} $ and 
\begin{equation*}
\Delta_{\lambda} = \left\{
\begin{array}{rl}
\frac{1}{2} \beta^2 - \beta \lambda & \text{if } \lambda < \beta \\
- \frac{1}{2} \lambda^2 & \text{if } \lambda \geq \beta 
\end{array} \right.
\end{equation*}
\begin{proof}[Proof of Lemma \ref{E_Nlambda}]
Following the same steps as in the proof of Lemma \ref{E_N} above we get 
\begin{align*}
E \Big( |N_t^{\lambda t}| \Big) &= E \Big( \sum_{u \in N_t} \mathbf{1}_{ \{ X_t^u > \lambda t \} } \Big) 
= \tilde{E} \Big( e^{\beta \tilde{L}_t} \mathbf{1}_{ \{ \xi_t > \lambda t \} } \Big) \\
&= E^{\tilde{Q}_{\beta}} \Big( e^{\beta |\xi_t|} \mathbf{1}_{ \{ \xi_t > \lambda t \} } \Big) e^{\frac{1}{2} \beta^2 t} \\
&= \int_{\lambda t}^{ \infty} e^{ \beta x} p(t;0,x) m( \mathrm{d}x) e^{\frac{1}{2} \beta^2 t} \\
&= \int_{\lambda t}^{\infty} e^{\beta x} \Big( \dfrac{1}{2 \sqrt{2 \pi t}} \exp \big( \beta x - 
\frac{\beta^2}{t} - \frac{x^2}{2t} \big) + \frac{\beta}{4} \text{Erfc} \big( \frac{x - \beta t}{ \sqrt{2t}} 
\big) \Big) 2e^{-2 \beta x} \mathrm{d}x \ e^{\frac{1}{2} \beta^2 t} \\
&= \Big( \int_{\lambda t}^{\infty} \dfrac{1}{\sqrt{2 \pi t}} e^{- \frac{x^2}{2t}} 
\mathrm{d}x \Big) + \Big( \frac{\beta}{2} \int_{\lambda t}^{\infty} \text{Erfc} 
\big( \frac{x - \beta t}{ \sqrt{2t}} \big) e^{- \beta x} \mathrm{d}x \Big) e^{\frac{1}{2} \beta^2 t} \text{,}
\end{align*}
where $m(\mathrm{d}x)$ and $p$ were defined in Proposition \ref{prop_drift_to_0}.

Then for some functions $\epsilon_i(t)$ satisfying $ \log \epsilon_i(t) = o(t)$ we have the 
following:
\[
\int_{\lambda t}^{\infty} \dfrac{1}{\sqrt{2 \pi t}} e^{- \frac{x^2}{2t}} \mathrm{d}x = 
\epsilon_1(t) e^{-\frac{\lambda^2}{2}t} \text{,}
\]
\begin{equation*}
\Big( \frac{\beta}{2} \int_{\lambda t}^{\infty} \text{Erfc} 
\big( \frac{x - \beta t}{ \sqrt{2t}} \big) e^{- \beta x} \mathrm{d}x \Big) e^{\frac{1}{2} \beta^2 t} = \left\{
\begin{array}{rl}
\epsilon_2(t) e^{-\frac{\lambda^2}{2}t} & \text{if } \lambda \geq \beta \\
 \epsilon_3(t) e^{- \beta \lambda t + \frac{\beta^2}{2}t} + \epsilon_4(t) e^{\frac{- \beta^2}{2}t} & \text{if } \lambda < \beta 
\end{array} \right.
\end{equation*}
where we have used that Erfc$(x) \sim \frac{1}{x \sqrt{\pi}} e^{- x^2 }$ as 
$x \to \infty$ and Erfc$(x) \to 2$ as $x \to - \infty$. Thus 
\begin{equation*}
E(|N_t^{\lambda t}|) = \left\{
\begin{array}{rl}
\epsilon_6(t) e^{-\frac{\lambda^2}{2}t} & \text{if } \lambda \geq \beta \\
\epsilon_7(t) e^{- \beta \lambda t + \frac{\beta^2}{2}t} & \text{if } \lambda < \beta 
\end{array} \right.
\end{equation*}
which proves the result after taking the logarithm and dividing by $t$.
\end{proof}
Again, we could have evaluated $\tilde{E} \Big( e^{\beta \tilde{L}_t} \mathbf{1}_{ \{ \xi_t > \lambda t \} } \Big) $ explicitly 
by using the joint density of $ \xi_t$ and $ \tilde{L}_t$ (for example, see \cite{21}):
\begin{equation}
\label{joint_l_xi}
\tilde{P} \big( \xi_t \in \mathrm{d}x , \tilde{L}_t \in \mathrm{d}y \big) = 
\frac{|x| + y}{ \sqrt{2 \pi t^3}} \exp \Big\{ - \frac{(|x| + y)^2}{2 t}
\Big\} \mathrm{d}x \mathrm{d}y \text{ ,} \qquad x \in \mathbb{R} \text{, } y > 0 \text{.}
\end{equation}
Lemmas \ref{E_N} and \ref{E_Nlambda} can also be proved via excursion 
theory (for example, see \cite{18}). The proofs that we presented here (using the change of measure) in particular suggest the importance of 
the additive martingale $(M_t)_{t \geq 0}$ in the study of the model. 
In the next section we shall see one simple application of this martingale.
\section{Almost sure asymptotic growth of $|N_t|$}
In this section we prove Theorem \ref{N_as} which says that $\log |N_t| \sim \frac{1}{2} \beta^2 t \ P$-almost surely.
\begin{proof}[Proof of Theorem \ref{N_as}]
Let us first obtain the lower bound: 
\begin{equation}
\label{N_as_lower}
\liminf_{t \to \infty} \frac{\log |N_t|}{t} \geq \frac{1}{2} \beta^2 \quad P \text{-a.s.}
\end{equation}
We observe that
\[
M_t =  \sum_{u \in N_t} \exp \Big\{ - \beta |X_t^u| - \dfrac{1}{2} \beta^2 t \Big\}
\leq |N_t| e^{- \frac{1}{2} \beta^2 t} \text{,}
\]
hence $ \log M_t \leq \log |N_t| - \frac{1}{2} \beta^2 t$ and so $t^{-1} \log |N_t| \geq \frac{1}{2} \beta^2 + t^{-1} \log M_t$. 
Using the fact that $ \lim_{t \to \infty} M_t > 0 \ P$-a.s. from Theorem \ref{UI_Mbeta}, we find that
\[
\liminf_{t \to \infty} \frac{\log |N_t|}{t} \geq \frac{1}{2} \beta^2 \text{.}
\]
Let us now establish the upper bound:
\begin{equation}
\label{N_as_upper}
\limsup_{t \to \infty} \frac{\log |N_t|}{t} \leq \frac{1}{2} \beta^2 \quad P \text{-a.s.}
\end{equation}
We first prove \eqref{N_as_upper} on integer (or other lattice) times. Take $ \epsilon > 0$. Then
\[
P \big( |N_t| e^{-(\frac{1}{2} \beta^2 + \epsilon)t} > \epsilon \big) \leq 
\dfrac{E |N_t| e^{-(\frac{1}{2} \beta^2 + \epsilon)t}}{\epsilon} \sim \dfrac{2}{ \epsilon} 
e^{- \epsilon t} 
\]
using the Markov inequality and Theorem \ref{E_N}. So 
\[
\sum_{n = 1}^{ \infty} P \big( |N_n| e^{-(\frac{1}{2} \beta^2 + \epsilon)n} > \epsilon \big) < \infty \text{.}
\]
Thus by the Borel-Cantelli lemma
\[
|N_n| e^{-(\frac{1}{2} \beta^2 + \epsilon)n} \to 0 \ P \text{-a.s. as} \ n \to \infty \text{.}
\]
Taking the logarithm we get
\[
\big(- \frac{1}{2} \beta^2 - \epsilon \big) n + \log |N_n| \to - \infty \text{.}
\]
Hence
\[
\limsup_{n \to \infty} \dfrac{\log |N_n|}{n} \leq \dfrac{1}{2} \beta^2 + \epsilon 
\]
and taking the limit $ \epsilon \to 0$ we get the desired result. To get the convergence over any real-valued 
sequence we note that $|N_t|$ is an increasing process and so
\[
\frac{\log |N_t|}{t} \leq \frac{\lceil t \rceil}{t} \frac{\log |N_{\lceil t \rceil}|}{\lceil t \rceil} \text{.}
\]
Hence
\[
\limsup_{t \to \infty} \frac{\log |N_t|}{t} \leq 
\limsup_{t \to \infty} \frac{\log |N_{\lceil t \rceil}|}{\lceil t \rceil} \leq \frac{1}{2} \beta^2 \text{.}
\]
Combining \eqref{N_as_upper} and \eqref{N_as_lower} now proves Theorem \ref{N_as}.
\end{proof}
\section{Almost sure asymptotic behaviour of $|N_t^{ \lambda t}|$}
In this section we prove Theorem \ref{Nlambda_as}. Namely, that
\[
\frac{\log |N_t^{\lambda t}|}{t} \to \Delta_{\lambda} \ P \text{-a.s. if } \lambda < \frac{\beta}{2}
\]
and
\[
|N_t^{\lambda t}| \to 0 \ P \text{-a.s. if } \lambda > \frac{\beta}{2} \text{.}
\]
We break the proof into two parts. In subsection 6.1 
we prove the upper bound and in subsection 6.2 the lower bound. Also  in subsections 6.3 and 6.4 present the 
proofs of Lemma \ref{log_P}, saying that $\lim_{t \to \infty} t^{-1} P(|N_t^{\lambda t}| \geq 1) = \Delta_{\lambda}$ if 
$ \lambda > \frac{\beta}{2}$, and Corollary \ref{Th_rightmost}, saying that $\lim_{t \to \infty} t^{-1} R_t = \frac{\beta}{2}$, 
where $R_t$ is the position of the rightmost particle at time $t$. 
\subsection{Upper bound}
\begin{Lemma}
\label{Nlambda_upper}
\[
\limsup_{t \to \infty} \dfrac{\log |N_t^{\lambda t}|}{t} \leq \Delta_{ \lambda} \quad P \text{-a.s.}
\]
\end{Lemma}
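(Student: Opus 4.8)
The plan is to run a first-moment/Borel--Cantelli argument on integer times, the one genuine difficulty being that $t\mapsto|N_t^{\lambda t}|$ is \emph{not} monotone, so one cannot pass from integer to real times by the crude interpolation used for Theorem \ref{N_as}. To get around this, I would dominate the whole trajectory of $|N_\cdot^{\lambda\,\cdot}|$ over a window $[n,n+1]$ by a single $\mathcal{F}_{n+1}$-measurable counting functional. Since particles never die, every $u\in N_t$ has a descendant in $N_{n+1}$, and if $u\in N_t^{\lambda t}$ with $t\in[n,n+1]$ then any descendant $w\in N_{n+1}$ of $u$ has its historical path passing through $X^u_t>\lambda t\ge\lambda n$ during $[n,n+1]$. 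Hence, setting
\[
A_n:=\#\Big\{w\in N_{n+1}:\ \sup_{s\in[n,n+1]}X^w_s>\lambda n\Big\},
\]
and using that distinct particles of $N_t$ have disjoint descendant sets, one gets $|N_t^{\lambda t}|\le A_n$ for every $t\in[n,n+1]$.

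The next step is to bound $E(A_n)$. Applying the Many-to-One Theorem (Theorem \ref{manytoone_0}) with $f_w(n+1)=\mathbf{1}\{\sup_{s\in[n,n+1]}X^w_s>\lambda n\}$, whose spine version is $\mathbf{1}\{\sup_{s\in[n,n+1]}\xi_s>\lambda n\}$, gives
\[
E(A_n)=\tilde E\Big(e^{\beta\tilde L_{n+1}}\,\mathbf{1}\big\{\textstyle\sup_{s\in[n,n+1]}\xi_s>\lambda n\big\}\Big).
\]
Fix $\delta\in(0,\lambda)$ and use $\{\sup_{s\in[n,n+1]}\xi_s>\lambda n\}\subseteq\{\xi_n>(\lambda-\delta)n\}\cup\{\sup_{s\in[n,n+1]}(\xi_s-\xi_n)>\delta n\}$. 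On the first event, condition on $\mathcal{G}_n$: under $\tilde P$ the spine restarts from $\xi_n$ as a Brownian motion, and the local time at $0$ over one unit of time of a Brownian motion started from an \emph{arbitrary} point is stochastically dominated by $\tilde L_1\stackrel{d}{=}|N(0,1)|$ (since such a motion must first reach the origin), so $\tilde E(e^{\beta(\tilde L_{n+1}-\tilde L_n)}\mid\mathcal{G}_n)\le 2\Phi(\beta)e^{\beta^2/2}=:K_\beta<\infty$. Using the identity $\tilde E(e^{\beta\tilde L_n}\mathbf{1}\{\xi_n>\mu n\})=E(|N^{\mu n}_n|)$ from the proof of Lemma \ref{E_Nlambda}, and then Lemma \ref{E_Nlambda} itself,
\[
\tilde E\big(e^{\beta\tilde L_{n+1}}\mathbf{1}\{\xi_n>(\lambda-\delta)n\}\big)\le K_\beta\,E\big(|N^{(\lambda-\delta)n}_n|\big)=e^{(\Delta_{\lambda-\delta}+o(1))n}.
\]
On the second event, Cauchy--Schwarz together with $\tilde E(e^{2\beta\tilde L_{n+1}})=2\Phi(2\beta\sqrt{n+1})e^{2\beta^2(n+1)}\le 2e^{2\beta^2(n+1)}$ (the same computation as in the proof of Lemma \ref{E_N}, now with $2\beta$) and the reflection estimate $\tilde P(\sup_{s\in[0,1]}(\xi_{n+s}-\xi_n)>\delta n)\le 2e^{-\delta^2n^2/2}$ show this contribution is $O\big(e^{\beta^2 n-\delta^2 n^2/4}\big)$, i.e.\ super-exponentially small. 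Since $\mu\mapsto\Delta_\mu$ is continuous, given $\eps>0$ we may pick $\delta$ so small that $\Delta_{\lambda-\delta}<\Delta_\lambda+\eps$, and conclude $E(A_n)\le e^{(\Delta_\lambda+2\eps)n}$ for all large $n$.

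Finally, Markov's inequality gives $P(A_n\ge e^{(\Delta_\lambda+3\eps)n})\le e^{-\eps n}$ for large $n$, which is summable, so Borel--Cantelli yields $A_n<e^{(\Delta_\lambda+3\eps)n}$ for all large $n$, $P$-a.s.; combined with $|N_t^{\lambda t}|\le A_n$ on $[n,n+1]$ this gives $\limsup_{t\to\infty}t^{-1}\log|N_t^{\lambda t}|\le\Delta_\lambda+3\eps$ $P$-a.s. (and, when $\Delta_\lambda+3\eps<0$, simply $|N_t^{\lambda t}|=0$ for all large $t$). Letting $\eps\downarrow0$ along a sequence completes the proof. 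I expect the main obstacle to be exactly this interpolation: producing an $\mathcal{F}_{n+1}$-measurable dominator of the whole window $[n,n+1]$ and then estimating its first moment \emph{without losing the exponential rate} --- in particular handling $\tilde L_{n+1}$ in place of $\tilde L_n$, which is what forces the local-time stochastic-domination step rather than a (too lossy) Cauchy--Schwarz bound.
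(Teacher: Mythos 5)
Your proof is correct and follows essentially the same route as the paper: dominate $|N_t^{\lambda t}|$ on $[n,n+1]$ by the $\mathcal{F}_{n+1}$-measurable count of particles whose path exceeds $\lambda n$ during the window, estimate its expectation via the Many-to-One Theorem by splitting at velocity $\lambda-\delta$ (using continuity of $\lambda\mapsto\Delta_\lambda$) together with a super-exponentially small within-window fluctuation term, and conclude by Markov plus Borel--Cantelli and the interpolation $|N_t^{\lambda t}|\le A_{\lfloor t\rfloor}$. The only differences are cosmetic: you split on $\xi_n$ rather than $\xi_{n+1}$ and control the extra unit of local time by stochastic domination so as to reuse Lemma \ref{E_Nlambda} directly, whereas the paper changes measure to $\tilde{Q}_{\beta}$ and works with the explicit transition density; these are interchangeable implementations of the same estimate.
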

The upper bound can be proved in a similar way to the upper 
bound on $|N_t|$ (recall \ref{N_as_upper}). The main difference comes from the fact that $(|N_t^{\lambda t}|)_{t \geq 0}$ 
is not an increasing process and so getting convergence along any real time sequence requires some 
extra work.
\begin{proof}
Take $ \epsilon > 0$ and consider events
\[
A_n = \Big\{ \sum_{u \in N_{n+1}} \mathbf{1}_{\{ \sup_{s \in [n,n+1]} X^u_s \ \geq \ \lambda n\}} 
> e^{(\Delta_{\lambda} + \epsilon)n} \Big\} \text{.}
\]
If we can show that $P(A_n)$ decays to $0$ exponentially fast then by the Borel-Cantelli Lemma 
we would have $P(A_n  \ i.o. ) = 0$ and that would be sufficient to get the result.

By the Markov inequality and the Many-to-one theorem (Theorem \ref{manytoone_0}) we have
\begin{align*}
P \big( A_n \big) &\leq E \Big( \sum_{u \in N_{n+1}} \mathbf{1}_{\{ \sup_{s \in [n,n+1]} X^u_s \ \geq \ \lambda n\}} \Big) 
e^{-(\Delta_{\lambda} + \epsilon)n} \\
&= \tilde{E} \Big( e^{ \beta \tilde{L}_{n+1}} \mathbf{1}_{\{ \sup_{s \in [n,n+1]} \xi_s \ \geq \ \lambda n\}} \Big) 
e^{-(\Delta_{\lambda} + \epsilon)n} \\
&= \tilde{E} \Big( e^{ \beta \tilde{L}_{n+1}} \mathbf{1}_{\{ \xi_{n+1} + \bar{ \xi}_n \ \geq \ \lambda n\}} \Big) 
e^{-(\Delta_{\lambda} + \epsilon)n} \text{,}
\end{align*}
where $\bar{ \xi}_n := \sup_{s \in [n, n+1]} (\xi_s - \xi_{n+1})$ is a sequence of i.i.d. random variables equal in distribution to 
$\sup_{s \in [0,1]} \xi_s$ and $(\xi_t)_{t \geq 0}$ is a standard Brownian motion under $ \tilde{P}$.

To give an upper bound on the expectation we split it according to whether $ |\xi_{n+1}| $ is greater or less than 
$ (\lambda - \delta)(n+1)$ for some small $ \delta > 0$ to be chosen later.
\begin{align}
\label{expect_000}
&\tilde{E} \Big( e^{ \beta \tilde{L}_{n+1}} \mathbf{1}_{\{ \xi_{n+1} + \bar{ \xi}_n \ \geq \ \lambda n\}} \Big) 
e^{-(\Delta_{\lambda} + \epsilon)n} \nonumber \\
= &\tilde{E} \Big( e^{ \beta \tilde{L}_{n+1}} \mathbf{1}_{\{ \xi_{n+1} + \bar{ \xi}_n \ \geq \ \lambda n\}} 
\mathbf{1}_{\{ |\xi_{n+1}| \ > \ (\lambda - \delta)(n + 1) \}} \Big) e^{-(\Delta_{\lambda} + \epsilon)n} \nonumber \\
+ &\tilde{E} \Big( e^{ \beta \tilde{L}_{n+1}} \mathbf{1}_{\{ \xi_{n+1} + \bar{ \xi}_n \ \geq \ \lambda n\}} 
\mathbf{1}_{\{ |\xi_{n+1}| \ \leq \ (\lambda - \delta)(n + 1) \}} \Big) e^{-(\Delta_{\lambda} + \epsilon)n} \text{.}
\end{align}
Then from Theorem \ref{E_Nlambda} we have 
\begin{align*}
&\frac{1}{n} \log \Big( \tilde{E} \Big( e^{ \beta \tilde{L}_{n+1}} \mathbf{1}_{\{ \xi_{n+1} + \bar{ \xi}_n \ \geq \ \lambda n\}} 
\mathbf{1}_{\{ |\xi_{n+1}| \ > \ (\lambda - \delta)(n + 1) \}} \Big) e^{-(\Delta_{\lambda} + \epsilon)n} \Big) \\
\leq &\frac{1}{n} \log \Big( \tilde{E} \Big( e^{ \beta \tilde{L}_{n+1}} \mathbf{1}_{\{ |\xi_{n+1}| \ > \ (\lambda - \delta)(n + 1) \}}
\Big) e^{-(\Delta_{\lambda} + \epsilon)n} \Big) \\
= &\frac{1}{n} \log \Big( 2 \tilde{E} \Big( e^{ \beta \tilde{L}_{n+1}} \mathbf{1}_{\{ \xi_{n+1} \ > \ (\lambda - \delta)(n + 1) \}}
\Big) \Big) - (\Delta_{\lambda} + \epsilon) \\
\to & \Delta_{\lambda - \delta} - (\Delta_{\lambda} + \epsilon) \text{.}
\end{align*}
Since $ \Delta_{\lambda}$ is continuous in $ \lambda$, $ \Delta_{\lambda - \delta} - (\Delta_{\lambda} + \epsilon) < 0$ for 
$ \delta$ chosen small enough and hence the first expectation in \eqref{expect_000} decays exponentially fast. For the second expectation 
we have the following:
\begin{align*}
&\tilde{E} \Big( e^{ \beta \tilde{L}_{n+1}} \mathbf{1}_{\{ \xi_{n+1} + \bar{ \xi}_n \ \geq \ \lambda n\}} 
\mathbf{1}_{\{ |\xi_{n+1}| \ \leq \ (\lambda - \delta)(n + 1) \}} \Big) e^{-(\Delta_{\lambda} + \epsilon)n} \\
= &E^{\tilde{Q}_{\beta}} \Big( e^{ \beta | \xi_{n+1}| + \frac{1}{2} \beta^2(n+1)} \mathbf{1}_{\{ \xi_{n+1} + \bar{ \xi}_n \ \geq \ \lambda n\}} 
\mathbf{1}_{\{ |\xi_{n+1}| \ \leq \ (\lambda - \delta)(n + 1) \}} \Big) e^{-(\Delta_{\lambda} + \epsilon)n} \\
\leq &C E^{\tilde{Q}_{\beta}} \Big( \mathbf{1}_{\{ \xi_{n+1} + \bar{ \xi}_n \ \geq \ \lambda n\}} 
\mathbf{1}_{\{ |\xi_{n+1}| \ \leq \ (\lambda - \delta)(n + 1) \}} \Big) e^{K n} \\
\leq &C E^{\tilde{Q}_{\beta}} \Big( \mathbf{1}_{\{ \bar{ \xi}_n \ \geq \ \delta n + (\delta - \lambda) \}} \Big) e^{K n} \\
= &C  \tilde{Q}_{\beta} \big( \bar{ \xi}_1 \ \geq \ \delta n + (\delta - \lambda) \big) e^{K n} \text{,} 
\end{align*}
where $C = e^{\frac{1}{2} \beta^2 + (\lambda - \delta)}$ and $ K = \frac{1}{2} \beta^2 + \beta( \lambda - \delta) - ( \Delta_{\lambda} + \epsilon)$. 
However $\tilde{Q}_{\beta} \big( \bar{ \xi}_1 \ \geq \ \delta n + (\delta - \lambda) \big)$ decays faster than exponentially in $n$.  
To see this observe that for any $ \theta$ arbitrarily large
\[
\tilde{Q}_{\beta} \big( \bar{ \xi}_1 \ \geq \ \delta n \big) \leq E^{\tilde{Q}_{\beta}} \Big( e^{ \theta \bar{ \xi}_1 } \Big) e^{- \theta \delta n} \text{,}
\]
where 
\begin{align*}
E^{\tilde{Q}_{\beta}} \Big( e^{ \theta \bar{ \xi}_1 } \Big) &= \tilde{E} \Big( e^{ \theta \bar{ \xi}_1 }
e^{- \beta | \xi_1| + \beta \tilde{L}_1 - \frac{1}{2} \beta^2} \Big) \leq 
\tilde{E} \Big( e^{ \theta \bar{ \xi}_1 + \beta \tilde{L}_1 } \Big) \\
&\leq \Big( e^{2 \theta \bar{ \xi}_1} \Big)^{\frac{1}{2}} \Big( e^{2 \beta \tilde{L}_1} \Big)^{\frac{1}{2}} < \infty
\end{align*}
using the Cauchy-Schwarz inequality and the fact that $ \tilde{L}_1 \stackrel{d}{=} \bar{\xi}_1 \stackrel{d}{=} |N(0,1)|$ under $ \tilde{P}$. 
Thus we have shown that the expectation in \eqref{expect_000} and consequently $P(A_n)$ decay exponentially fast. 

So by the Borel-Cantelli lemma $P(A_n  \ i.o.) = 0$ and $P( A_n^c \ ev.) = 1$. That is,
\[
\sum_{u \in N_{n+1}} \mathbf{1}_{\{ \sup_{s \in [n,n+1]} X^u_s \ \geq \ \lambda n\}} 
\leq e^{(\Delta_{\lambda} + \epsilon)n} \text{ eventually.}
\]
So there exists a $P$-almost surely finite time $T_{\epsilon}$ such that $ \forall n > T_{\epsilon} $ 
\[
\sum_{u \in N_{n+1}} \mathbf{1}_{\{ \sup_{s \in [n,n+1]} X^u_s \ \geq \ \lambda n\}} 
\leq e^{(\Delta_{\lambda} + \epsilon)n} \text{.}
\]
Then 
\begin{align*}
&|N_t^{\lambda t}| \leq \sum_{u \in N_{ \lfloor t \rfloor + 1}} 
\mathbf{1}_{\{ \sup_{s \in [ \lfloor t \rfloor, \ \lfloor t \rfloor + 1]} X^u_s \ \geq \ \lambda \lfloor t \rfloor \}} \\
\Rightarrow &|N_t^{\lambda t}| \leq e^{(\Delta_{\lambda} + \epsilon) \lfloor t \rfloor } \quad \text{ for } t > T_{\epsilon} + 1 \text{,}
\end{align*}
which proves that 
\[
\limsup_{t \to \infty} \frac{\log |N_t^{\lambda t}|}{t} \leq \Delta_{\lambda} \quad P \text{-a.s.}
\]
\end{proof}
\begin{Remark}
\label{rem_logN}
Since $|N_t^{\lambda t}|$ takes only integer values we see that for $\lambda > \frac{\beta}{2}$ the inequality 
\[
\limsup_{t \to \infty} \frac{\log |N_t^{\lambda t}|}{t} \leq \Delta_{\lambda} < 0
\]
actually implies that $|N_t^{\lambda t}| \to 0 \ P$-a.s.
\end{Remark}
\subsection{Lower bound}
Before we present the proof of the lower bound of Theorem \ref{Nlambda_as} let us give a heuristic 
argument, which this proof will be based upon.

Take $ \lambda > 0$. Suppose we are given some large time $t$ and we want to estimate the number of 
particles $u \in N_t$ such that $|X^u_t| > \lambda t$. 

Let $p \in [0, 1]$. At time $pt$ the number of particles in the system is $|N_{pt}| \approx e^{\frac{1}{2} \beta^2 p t}$ by 
Theorem \ref{N_as}. If we ignore any branching that takes place in the time interval $(pt, t]$ then 
each of these particles will end up in the region $(- \infty , - \lambda t] \cup [\lambda t, \infty)$ at time $t$ 
with probability $ \gtrsim e^{-\frac{\lambda^2}{2(1-p)}t}$ using the standard estimate of the tail distribution of a normal 
random variable. 

Thus a crude estimate gives us that the number of particles at time $t$ in the region 
$(- \infty , - \lambda t] \cup [\lambda t, \infty)$ is 
\[
\gtrsim e^{-\frac{\lambda^2}{2(1-p)}t} \times |N_{pt}| \approx e^{-\frac{\lambda^2}{2(1-p)}t} \times e^{\frac{1}{2} \beta^2 p t} \text{.}
\]
The value of $p$ which maximises this expression is
\begin{equation*}
p^{\ast} = \left\{
\begin{array}{rl}
0 & \text{if } \lambda \geq \beta \\ 
1 - \frac{\lambda}{\beta} & \text{if } \lambda < \beta 
\end{array} \right.
\end{equation*}
and then 
\begin{equation*}
\frac{\log \Big( e^{-\frac{\lambda^2}{2(1-p)}t} \times |N_{pt}| \Big)}{t} \Big\vert_{p = p^{\ast}} \sim \left\{
\begin{array}{rl}
-\frac{1}{2} \lambda^2 & \text{if } \lambda \geq \beta \\ 
\frac{1}{2} \beta^2 - \beta \lambda & \text{if } \lambda < \beta 
\end{array} \right. = \Delta_{\lambda} \text{.}
\end{equation*}
Let us now use this idea to give a formal proof of the following lemma. 
\begin{Lemma}
\label{Nlambda_lower}
Take $ \lambda < \frac{\beta}{2}$. Then 
\[
\liminf_{t \to \infty} \frac{\log |N_t^{\lambda t}|}{t} \geq \Delta_{\lambda} = \frac{1}{2} \beta^2 - 
\beta \lambda \quad P \text{-a.s.}
\] 
\end{Lemma}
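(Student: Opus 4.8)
The plan is to turn the heuristic above into an almost sure statement by a ``many particles'' argument. Fix $\eta>0$; it suffices to show $\liminf_{t\to\infty}t^{-1}\log|N_t^{\lambda t}|\geq\Delta_\lambda-\eta$ $P$-a.s.\ and then let $\eta\downarrow0$ along $\eta=1/k$, $k\in\N$. Set $p:=1-\lambda/\beta\in(\tfrac12,1)$ (using $0<\lambda<\beta/2$), recall the optimisation identity $\tfrac12\beta^2 p-\tfrac{\lambda^2}{2(1-p)}=\Delta_\lambda$, and choose $\varepsilon,\delta>0$ small, to be fixed at the end in terms of $\eta$. \emph{Step 1: many particles near the origin at the intermediate time $pt$.} Working along integers $n$, let $G_n:=\{u\in N_{pn}:|X^u_{pn}|\leq\delta pn\}$, which is $\F_{pn}$-measurable. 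Theorem \ref{N_as} gives $|N_{pn}|\geq e^{(\frac12\beta^2-\varepsilon)pn}$ eventually, while Lemma \ref{Nlambda_upper} (applied with velocity threshold $\delta$, together with its $x\mapsto-x$ mirror image) bounds the number of $u\in N_{pn}$ with $|X^u_{pn}|>\delta pn$ by $e^{(\Delta_\delta+\varepsilon)pn}$ eventually; since $\Delta_\delta+\varepsilon=\tfrac12\beta^2-\beta\delta+\varepsilon<\tfrac12\beta^2-\varepsilon$ once $\beta\delta>2\varepsilon$, we obtain $P$-a.s.\ that $|G_n|\geq k_n:=e^{(\frac12\beta^2-\varepsilon)pn}$ for all large $n$; call this a.s.\ event $\Gamma_n$.

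\emph{Step 2: a canonical descendant line in each subtree, and the basic inequality.} Conditionally on $\F_{pn}$ the particles of $G_n$ generate independent copies of the branching process. In the subtree of the $i$-th particle of $G_n$ follow one fixed descendant line (say, the lowest Ulam--Harris label at each fission); being a concatenation of Brownian pieces it is a Brownian motion started from $X^i_{pn}$, and the lines attached to distinct particles of $G_n$ are conditionally independent given $\F_{pn}$. Let $B_i$ be the event that this line stays above the level $\lambda(n+1)$ throughout $[n,n+1]$. Then $B_i$ forces the corresponding descendant, at every time $t\in[n,n+1]$, to lie above $\lambda(n+1)\geq\lambda t$, whence
\[
|N_t^{\lambda t}|\ \geq\ \sum_{i=1}^{|G_n|}\mathbf{1}_{B_i}\qquad\text{for all }t\in[n,n+1].
\]
A routine Gaussian tail estimate (decompose $B_i$ into ``the line exceeds $\lambda(n+1)+2\sqrt n$ at time $n$'' and ``its increment over $[n,n+1]$ stays above $-2\sqrt n$'', which are independent) yields, on $\Gamma_n$, a deterministic lower bound $P(B_i\mid\F_{pn})\geq q_n$ with $\liminf_n n^{-1}\log q_n\geq-\tfrac{(\lambda+\delta p)^2}{2(1-p)}$.

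\emph{Step 3: conditional concentration and Borel--Cantelli.} On $\Gamma_n$, conditionally on $\F_{pn}$, the sum $\sum_{i=1}^{|G_n|}\mathbf{1}_{B_i}$ stochastically dominates a $\mathrm{Binomial}(k_n,q_n)$ variable, whose mean $k_nq_n$ grows exponentially in $n$ with rate at least $\rho:=\tfrac12\beta^2 p-\tfrac{(\lambda+\delta p)^2}{2(1-p)}-\varepsilon$; by the identity above and continuity, $\rho\geq\Delta_\lambda-\eta>0$ once $\varepsilon,\delta$ are small. A Chernoff bound gives $P(\mathrm{Binomial}(k_n,q_n)\leq\tfrac12 k_nq_n)\leq e^{-k_nq_n/8}$, and since $k_nq_n$ is deterministic on $\Gamma_n$,
\[
P\Big(\Gamma_n\cap\big\{\textstyle\sum_i\mathbf{1}_{B_i}\leq\tfrac12 k_nq_n\big\}\Big)\leq e^{-k_nq_n/8},
\]
which is summable in $n$. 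By Borel--Cantelli together with Step 1, $P$-a.s.\ for all large $n$ we have $|N_t^{\lambda t}|\geq\tfrac12 k_nq_n$ for every $t\in[n,n+1]$, so $\liminf_{t\to\infty}t^{-1}\log|N_t^{\lambda t}|\geq\rho\geq\Delta_\lambda-\eta$. Letting $\eta\downarrow0$ finishes the proof.

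\emph{Main obstacle.} The heuristic controls only the conditional mean of $|N_t^{\lambda t}|$, so the real work is upgrading this to an almost sure lower bound. This forces the argument to combine (a) the a.s.\ population lower bound at the intermediate time $pt$ from Theorem \ref{N_as} with the a.s.\ position control from Lemma \ref{Nlambda_upper}, (b) a conditional concentration estimate exploiting the independence of the subtrees rooted at time $pt$, and (c) a Borel--Cantelli passage to real times, for which it is convenient to ask that the chosen line stay above $\lambda(n+1)$ over a whole unit interval rather than at a single instant. One must also verify that $p=1-\lambda/\beta$, $\delta$ and $\varepsilon$ can be tuned so that the exponential rate of $k_nq_n$ approaches the optimal value $\Delta_\lambda$; this is exactly where the hypothesis $\lambda<\beta/2$ enters, since it is precisely the condition $\Delta_\lambda>0$ that makes $k_nq_n\to\infty$ and renders the Chernoff estimate effective.
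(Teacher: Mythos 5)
Your proof is correct and follows essentially the same two-stage strategy as the paper: grow the population up to the intermediate time $pn$ with $p=1-\lambda/\beta$, attach one Brownian descendant line to each time-$pn$ particle, demand that it clear the level over the whole interval $[n,n+1]$ (which handles non-lattice times), and upgrade the conditional mean to an almost sure bound via conditional independence, a Chernoff-type estimate and Borel--Cantelli. The only, harmless, differences in execution are that you restrict to particles within $\delta pn$ of the origin using the already-proved Lemma \ref{Nlambda_upper} so as to work with the one-sided event directly, whereas the paper uses the two-sided event $\{|X^u_s|>\lambda s \ \forall s\in[n,n+1]\}$ (whose probability is bounded below uniformly in the starting point) followed by a symmetry argument and a second Borel--Cantelli step, and your bound $|G_n|\geq k_n$ should strictly be $|G_n|\geq (1-o(1))k_n$, a constant that is absorbed by adjusting $\varepsilon$.
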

\begin{proof}
Take $p := 1 - \frac{\lambda}{\beta} \in (\frac{1}{2}, 1)$. For integer 
times $n$ we shall consider particles alive at time $p n$ (that is, particles 
in the set $N_{p n}$).

For each particle $u \in N_{pn}$ we can choose one descendant alive at time $n + 1$. 
Let $ \hat{N}_{n+1}$ be a set of such descendants (so that 
$| \hat{N}_{n + 1}| = |N_{pn}|$). \newline Then, for $u \in \hat{N}_{n+1}$, paths 
$ \big( X_t^u \big)_{t \in [pn, \ n+1]} $ correspond to independent Brownian 
motions (started at some unknown positions at time $p n$). Note that, wherever particle 
$u$ is at time $p n$,
\[
P \Big(  |X^u_s| > \lambda s \ \forall s \in [n, n+1] \Big) \gtrsim 
e^{- \frac{ \lambda^2}{2(1-p)} n} = e^{- \frac{1}{2} \beta \lambda n} =: q_n( \lambda)
\]
using the tail estimate of the normal distribution. Take a small $ \delta > 0$ to be specified later.
Then by Theorem \ref{N_as} 
\[
| \hat{N}_{n+1}| = |N_{pn}| \geq e^{(\frac{1}{2} \beta^2 p - \delta)n}  \text{ eventually.} 
\]
To prove Lemma \ref{Nlambda_lower} we fix an arbitrary $ \epsilon > 0$ and consider the events
\[
B_n := \Big\{ \sum_{u \in \hat{N}_{n + 1}} \mathbf{1}_{\{ |X^u_s| > \lambda s \ \forall s \in [n, n+1]  \}} 
< e^{(\Delta_{\lambda} - \epsilon)n} \Big\} \text{.}
\]
We wish to show that $P(B_n \ i.o.) = 0$. Now, 
\begin{align*}
&P \Big( B_n \cap \big\{ |\hat{N}_{n+1}| > e^{(\frac{1}{2} \beta^2 p - \delta)n} \big\} \Big) \\ 
= &P \Big( \Big\{ \sum_{u \in \hat{N}_{n + 1}} \mathbf{1}_{\{ |X^u_s| > \lambda s \ \forall s \in [n, n+1]  \}} 
< e^{(\Delta_{\lambda} - \epsilon)n} \Big\} \cap \big\{ |\hat{N}_{n+1}| > e^{(\frac{1}{2} \beta^2 p - \delta)n} \big\} \Big) \\ 
\leq &P \bigg( \sum_{i = 1}^{e^{(\frac{1}{2} \beta^2 p - \delta)n}} \mathbf{1}_{A_i} < e^{(\Delta_{\lambda} - \epsilon)n} \bigg) \text{,}
\end{align*}
where $A_i$'s are independent events with $P(A_i) \geq q_n(\lambda) \ \forall i$ \text{.} Then 
\begin{align*}
P \bigg( \sum_{i = 1}^{e^{(\frac{1}{2} \beta^2 p - \delta)n}} \mathbf{1}_{A_i} < e^{(\Delta_{\lambda} - \epsilon)n} \bigg) 
&= P \Big( e^{- \sum \mathbf{1}_{A_i}} > e^{- e^{(\Delta_{\lambda} - \epsilon)n}} \Big) \\
&\leq e^{e^{(\Delta_{\lambda} - \epsilon)n}} E \Big( e^{- \sum \mathbf{1}_{A_i}} \Big) \\
&= e^{e^{(\Delta_{\lambda} - \epsilon)n}} \prod_{i = 1}^{e^{(\frac{1}{2} \beta^2 p - \delta)n}} E \Big( e^{- \mathbf{1}_{A_i}} \Big) \\
&\leq e^{e^{(\Delta_{\lambda} - \epsilon)n}} \prod \Big( 1 - P(A_i) (1 - e^{-1}) \Big) \\
&\leq e^{e^{(\Delta_{\lambda} - \epsilon)n}} \prod \Big( 1 - q_n(\lambda) (1 - e^{-1}) \Big) \\
&\leq e^{e^{(\Delta_{\lambda} - \epsilon)n}} \prod e^{- q_n(\lambda) (1 - e^{-1})} \\
&= \exp \Big\{ e^{(\Delta_{\lambda} - \epsilon)n} - (1 - e^{-1}) q_n( \lambda) e^{(\frac{1}{2} \beta^2 p - \delta)n} \Big\} \\
&= \exp \Big\{ e^{(\Delta_{\lambda} - \epsilon)n} - (1 - e^{-1}) e^{(\Delta_{\lambda} - \delta )n} \Big\} \text{.}
\end{align*}
This expression decays fast enough if we take $\delta < \epsilon$. Thus 
\[
P \Big( B_n \cap \big\{ |\hat{N}_{n+1}| > e^{(\frac{1}{2} \beta^2 p - \delta)n} \big\} \text{ i.o.} \Big) = 0 \text{.}
\]
And since  $P \Big( \big\{ |\hat{N}_{n+1}| > e^{(\frac{1}{2} \beta^2 p - \delta)n} \big\} $ 
ev. $ \Big) = 1$, we get that $ P \Big( B_n $  i.o. $ \Big) = 0 $. That is, 
\[
\sum_{u \in \hat{N}_{n + 1}} \mathbf{1}_{\{ |X^u_s| > \lambda s \ \forall s \in [n, n+1]  \}} 
\geq e^{(\Delta_{\lambda} - \epsilon)n} \text{ for } n \text{ large enough } P \text{-almost surely.}
\]
Now, since the process is symmetric, the probability that a particle 
$u \in \hat{N}_{n+1}$ such that $|X^u_s| > \lambda s \ \forall s \in [n, n+1]$ 
actually satisfies $X^u_s > \lambda s \ \forall s \in [n, n+1]$ is $ \frac{1}{2}$. So 
applying the usual Borel-Cantelli argument once again we can for example prove that for 
some constant $C \in (0, \frac{1}{2})$
\[
\sum_{u \in \hat{N}_{n + 1}} 
\mathbf{1}_{\{ X^u_s > \lambda s \ \forall s \in [n, n+1]  \}} 
\geq C e^{(\Delta_{\lambda} - \epsilon)n} \text{ for } n \text{ large enough } P \text{-almost surely.}
\]
Then for $t$ large enough 
\[
|N_t^{\lambda t}| = \sum_{u \in N_t} \mathbf{1}_{\{ X^u_t > \lambda t\}} \geq \sum_{u \in \hat{N}_{\lfloor t \rfloor + 1}} 
\mathbf{1}_{\{ X^u_s > \lambda s \ \forall s \in [\lfloor t \rfloor , \lfloor t \rfloor + 1]  \}} 
\geq C e^{(\Delta_{\lambda} - \epsilon) \lfloor t \rfloor} \text{.}
\]
Thus
\[
\liminf_{t \to \infty} \frac{ \log |N_t^{\lambda t}|}{t} \geq \Delta_{ \lambda} \text{.}
\]
\end{proof}
Lemmas \ref{Nlambda_upper} and \ref{Nlambda_lower} together prove Theorem \ref{Nlambda_as}. 
\subsection{Decay of $P(|N_t^{\lambda t}| \geq 1)$ in the case $ \lambda > \frac{\beta}{2}$} 
Theorem \ref{Nlambda_as} told us that if $ \lambda > \frac{\beta}{2}$ then 
$|N_t^{\lambda t}| \to 0$. Let us also prove that in this case
\[
\frac{\log P(|N_t^{\lambda t}| \geq 1)}{t} \to \Delta_{\lambda} = - \frac{1}{2} \lambda^2 \text{.}
\]
\begin{proof}[Proof of Lemma \ref{log_P}]
Trivially $P(|N_t^{\lambda t}| \geq 1) \leq E |N_t^{ \lambda t}|$. Hence by Theorem \ref{E_Nlambda}
\[
\limsup_{t \to \infty} \frac{\log P(|N_t^{\lambda t}| \geq 1)}{t} \leq \Delta_{ \lambda} \text{.}
\]
For the lower bound we use the same idea as in Lemma \ref{Nlambda_lower}. Let us take 
\begin{equation*}
p = \left\{
\begin{array}{rl}
0 & \text{if } \lambda > \beta \\
1 - \frac{\lambda}{\beta} & \text{if } \lambda \leq \beta 
\end{array} \right.
\end{equation*}
We define a set $\hat{N}_t$ as in Subsection 6.2 That is, for each particle 
$u \in N_{pt}$ we choose one descendent alive at time $t$ (so that 
$\hat{N}_t \subset N_t$, $| \hat{N}_t| = |N_{pt}|$). Then for each 
$u \in \hat{N}_t$ wherever it is at time $pt$ we have 
\[
P(|X^u_t| > \lambda t) \gtrsim e^{- \frac{\lambda^2}{2(1 - p)} t} =: p_t( \lambda) \text{.}
\]
Then
\[
P \Big( |N_t^{\lambda t}| \geq 1 \Big) \geq \frac{1}{2} P \Big( |N_t^{ \pm \lambda t}| \geq 1 \Big) \text{,}
\]
where  $N_t^{\pm \lambda t} := \{ u \in N_t : |X^u_t| > \lambda t\}$. Thus for a small $ \delta > 0 $ to be specified 
later we have 
\begin{align*}
P \Big( |N_t^{\lambda t}| \geq 1 \Big) &\geq \frac{1}{2} P \Big( |N_t^{ \pm \lambda t}| \geq 1, \ 
|N_{pt}| > \underbrace{e^{(\frac{1}{2} \beta^2 p - \delta)t}}_{:= n_t(\delta)} \Big) \\
&\geq \frac{1}{2} P \Big( \bigcup_{u \in \hat{N}_t} \{ |X^u_t| > \lambda t \}, \ | N_{pt}| > n_t( \delta) \Big) \\ 
&\geq \frac{1}{2} \Big( 1 - \big( 1 - p_t(\lambda) \big)^{n_t(\delta)} \Big) P \big( | N_{pt}| > n_t( \delta) \big) \text{.}
\end{align*}
By Theorem \ref{N_as}, $P \big( | N_{pt}| > n_t( \delta) \big) \to 1$, so we can ignore this term. Then 
\begin{align*}
\Big( 1 - \big( 1 - p_t(\lambda) \big)^{n_t(\delta)} \Big) 
&= n_t(\delta) p_t(\lambda) - \binom{n_t(\delta)}{2} p_t(\lambda)^2 + 
\binom{n_t(\delta)}{3} p_t(\lambda)^3 - \cdots \\
&\geq n_t(\delta) p_t(\lambda) - n_t(\delta)^2 p_t(\lambda)^2 \Big(
1 + n_t(\delta) p_t(\lambda) + n_t(\delta)^2 p_t(\lambda)^2 + \cdots \Big) \text{.}
\end{align*}
Note that for $ \delta$ small enough
\[
n_t(\delta) p_t(\lambda) = e^{(\frac{1}{2} \beta^2 p - \delta)t} e^{- \frac{\lambda^2}{2(1 - p)}t} = e^{(\Delta_{\lambda} - \delta)t} \ \ll 1 \text{.}
\] 
Hence $P(|N_t^{\lambda t}| \geq 1) \geq \Big( \frac{1}{2} P \big( | N_{pt}| > n_t ( \delta) \big) \Big) 
e^{(\Delta_{\lambda} - \delta)t} + o \big( e^{(\Delta_{\lambda} - \delta)t} \big) $ and therefore
\[
\liminf_{t \to \infty} \dfrac{\log P(|N_t^{\lambda t}| \leq 1)}{t} \geq \Delta_{ \lambda} \text{.}
\]
This completes the proof of Lemma \ref{log_P}.
\end{proof}
\subsection{The rightmost particle} 
Observe that the number of particles above the line $\lambda t$ grows exponentially if $ \lambda < \frac{\beta}{2}$ and is eventually 
$0$ if $ \lambda > \frac{ \beta}{2}$. Hence, as a corollary of Theorem \ref{Nlambda_as}, we get that
\[
\frac{R_t}{t} \to \frac{\beta}{2} \quad P \text{-a.s.,}
\]
where $(R_t)_{t \geq 0}$ is the rightmost particle of the branching process. 
\begin{proof}[Proof of Corollary \ref{Th_rightmost}]
Take any $ \lambda < \frac{\beta}{2}$. By Theorem \ref{Nlambda_as} $|N_t^{\lambda t}| \geq 1 \ \forall t$ large enough, 
so $R_t \geq \lambda t$ for $t$ large enough. Thus $\liminf_{t \to \infty} t^{-1} R_t \geq \lambda \ P$-a.s. 
Letting $ \lambda \nearrow \frac{\beta}{2}$ we get
\[
\liminf_{t \to \infty} \frac{R_t}{t} \geq \frac{\beta}{2} \quad P \text{-a.s.}
\]
Similarly, if we take $ \lambda > \frac{\beta}{2}$ then by Theorem \ref{Nlambda_as} $|N_t^{\lambda t}| = 0 \ \forall t$ 
large enough and so $R_t \leq \lambda t$ for $t$ large enough. Hence $\limsup_{t \to \infty} t^{-1} R_t \leq \lambda \ P$-a.s.
So, letting $ \lambda \searrow \frac{\beta}{2}$ we get
\[
\limsup_{t \to \infty} \frac{R_t}{t} \leq \frac{\beta}{2} \quad P \text{-a.s.}
\]
and this proves Corollary \ref{Th_rightmost}.
\end{proof}
Note that the rightmost particle (that is, the extremal particle) in our model behaves 
very differently from the rightmost particle in the model with homogeneous branching. 

In the BBM model with homogeneous branching rate $\beta$ there is a particle staying near the critical line $ \sqrt{2 \beta}t$ all 
the time. (The word particle here is a bit ambiguous since we are really talking about an infinite 
line of descent, but this is a common description.)

On the other hand in the BBM model with branching rate $ \beta \delta_0(x)$, since branching only takes place at the origin, 
no particle can stay close to any straight line $ \lambda t$, $ \lambda > 0$ 
all the time. The optimal way for some particle to reach the critical line $ \frac{\beta}{2} t$ at time $T$ 
is to wait near the origin until the time $\frac{T}{2}$ in order to give birth to as many particles as 
possible, and then at time $ \frac{T}{2}$ one of $ \approx e^{\frac{\beta^2}{4}T}$ particles will have a 
good chance of reaching $ \frac{\beta}{2}T$ at time $T$.     
\section{Strong law of large numbers}
Recall the additive martingale $M_t = e^{- \frac{ \beta^2}{2}t} \sum_{u \in N_t} 
e^{- \beta |X^u_t|}$, $t \geq 0$ from \eqref{addtive_Mbeta} and the measure $ \pi( \mathrm{d}x) = \beta e^{- 2 \beta |x|} 
\mathrm{d}x$ from Proposition \ref{prop_drift_to_0}. 
In this section, we shall discuss Theorem \ref{new_SLLN} which says that for a measurable 
bounded function $f(\cdot)$ 
\begin{align}
\label{llmm}
\lim_{t \to \infty} e^{- \frac{\beta^2}{2}t} \sum_{u \in N_t} f(X^u_t) &= 
M_{\infty} \int_{- \infty}^{\infty} f(x) \beta e^{- \beta |x|} \mathrm{d}x \nonumber \\
&= M_{\infty} \int_{- \infty}^{\infty} f(x) e^{ \beta |x|} \pi( \mathrm{d}x ) \ P \text{-a.s.}
\end{align}
Observe that convergence in $L^1$ is trivial by the Many-to-One theorem and the uniform integrability 
of $(M_t)_{t \geq 0}$:
\begin{align*}
 E \Big( e^{- \frac{ \beta^2}{2}t} \sum_{u \in N_t} f(X^u_t) \Big) &= 
\tilde{E} \Big( e^{- \frac{ \beta^2}{2}t} f(\xi_t) e^{\beta \tilde{L}_t} \Big) \\
&=\tilde{E} \Big( f(\xi_t) e^{\beta |\xi_t|} \big(  
e^{- \beta |\xi_t| + \beta \tilde{L}_t - \frac{ \beta^2}{2}t } \big) \Big) \\ 
&= E^{\tilde{Q}_{\beta}} \Big( f(\xi_t) e^{\beta |\xi_t|} \Big) \to \int f(x) e^{\beta |x|} 
\pi (\mathrm{d}x) \text{.}
\end{align*}
Also the Weak Law of Large Numbers for this model have been proved by J. Engl\"ander and D. Turaev in \cite{19}. 
In particular they have given the law of $M_{\infty}$.

As a simple corollary of Theorem \ref{new_SLLN} we get by taking $f(\cdot) \equiv 1$ that
\begin{equation}
\label{mkkm}
|N_t| e^{- \frac{1}{2} \beta^2 t} \to 2 M_{ \infty} \quad P \text{-a.s.}
\end{equation}
This should be compared with results in Lemma \ref{E_N} and Theorem \ref{N_as}.

Dividing \eqref{llmm} by \eqref{mkkm} we get an alternative form of Theorem \ref{new_SLLN}: 
\[
\dfrac{\sum_{u \in N_t} f(X^u_t)}{|N_t|} \to \frac{1}{2} \int f(x) e^{\beta |x|} \pi 
(\mathrm{d}x) = \frac{\beta}{2} \int f(x) e^{- \beta |x|} \mathrm{d}x \quad P
\text{-a.s.}
\]
The Strong Law of Large Numbers was proved in \cite{17} for a large class of general diffusion processes and 
branching rates $\beta(x)$. In our case the branching rate is a generalised 
function $ \beta \delta_0(x)$, which doesn't satisfy the conditions of \cite{17}. 
Nevertheless we can adapt the proof to our model if we take the generalised principal 
eigenvalue $\lambda_c = \frac{\beta^2}{2}$ and the eigenfunctions $ \phi(x) = e^{- \beta |x|}$, 
$\tilde{\phi}(x) = \beta e^{- \beta |x|}$ in \cite{17}. Also the proof relies on the $L^p$ convergence of the martingale $(M_t)_{t \geq 0}$ and the 
linear asymptotic growth of the rightmost particle which we have derived earlier in this article.

Let us now finish the article with the proof of Theorem \ref{new_SLLN}.
\begin{proof}[Proof of Theorem \ref{new_SLLN}]
Take $B$ a measurable set. As it will be shown later, it is sufficient to prove the theorem for functions of the form 
$f(x) = e^{- \beta |x|} \mathbf{1}_{ \{ x \in B \} }$. For such set $B$ let
\[
U_t := e^{- \frac{\beta^2}{2}t} \sum_{u \in N_t} 
e^{- \beta |X^u_t|} \mathbf{1}_{ \{ X^u_t \in B \} } = e^{- \frac{\beta^2}{2}t} \sum_{u \in N_t} f(X^u_t) 
\]
So if $B = \mathbb{R}$ then we would have $U_t = M_t$ and generally 
$U_t \leq M_t$. We wish to show that 
\[
U_t \to \pi(B) M_{\infty} \ \Big( = \int f(x) e^{ \beta |x|}
\pi ( \mathrm{d}x) M_{\infty} \Big)  \text{ as } t \to \infty \text{.}
\]
The proof can be split into three parts.

\underline{Part I}:

Let us take $K > 0$. At this stage it doesn't matter what $K$ is, but in Part II of the proof 
we shall choose an appropriate value for it. Let $m_n := Kn $ (using the same notation as in 
\cite{17}). Also fix $ \delta > 0$. We first want to prove that
\begin{equation}
\label{new_eq1}
\lim_{n \to \infty} \Big\vert U_{(n + m_n) \delta} - E \Big( U_{(n + m_n) \delta} | 
\mathcal{F}_{n \delta} \Big) \Big\vert = 0 \quad P \text{-a.s.}
\end{equation}
We begin with the observation that
\begin{equation}
\label{new_eq2}
\forall s,t \geq 0 \qquad U_{s + t} = \sum_{u \in N_t} e^{- \frac{\beta^2}{2} t} U_s^{(u)} \text{,}
\end{equation}
where conditional on $ \mathcal{F}_t$, $U_s^{(u)}$ are independent 
copies of $U_s$ started from the positions $X^u_t$.

To prove \eqref{new_eq1} using the Borel-Cantelli lemma we need to show that for all 
$ \epsilon > 0$
\begin{equation}
\label{new_eq3}
\sum_{n = 1}^{ \infty} P \Big( \Big\vert U_{(n + m_n) \delta} - E \Big( U_{(n + m_n) \delta} | 
\mathcal{F}_{n \delta} \Big) \Big\vert > \epsilon \Big) < \infty \text{.}
\end{equation}
Let us take any $p \in (1,2)$.Then 
\begin{align*}
P \Big( \Big\vert U_{(n + m_n) \delta} - 
E \Big( U_{(n + m_n) \delta} | \mathcal{F}_{n \delta} \Big) \Big\vert > \epsilon \Big) 
\leq \frac{1}{ \epsilon^p} E \Big( \Big\vert U_{(n + m_n) \delta} - 
E \Big( U_{(n + m_n) \delta} | \mathcal{F}_{n \delta} \Big) \Big\vert^p \Big) \text{.}
\end{align*}
Next we shall apply the following inequality, which was used in the proof of the SLLN in \cite{17} and can also be found 
in \cite{16}: if $p \in (1,2)$ and $X_i$ are independent random variables with $ \mathbb{E}X_i = 0$ (or they are martingale differences), 
then 
\begin{equation}
\label{ten}
\mathbb{E} \Big\vert \sum_{i = 1}^n X_i \Big\vert^p \leq 2^p \sum_{i = 1}^n \mathbb{E} |X_i|^p \text{.}
\end{equation}
Then by \eqref{new_eq2} 
\[
U_{s+t} - E \Big( U_{s+t} | \mathcal{F}_t \Big) = \sum_{u \in N_t} e^{- \frac{\beta^2}{2} t} \Big( U_s^{(u)} - 
E \big( U_s^{(u)} | \mathcal{F}_t \big) \Big) \text{,}
\]
where conditional on $\mathcal{F}_t$, $U_s^{(u)} - E \big( U_s^{(u)} | \mathcal{F}_t \big)$ are independent 
with $0$ mean. Thus applying \eqref{ten} and Jensen's inequality we get 
\begin{align}
\label{new_eq4}
E \Big( \Big\vert U_{s + t} - E \Big( U_{s + t} | \mathcal{F}_t \Big) \Big\vert^p | \mathcal{F}_t \Big) 
\leq &2^p e^{- p \frac{\beta^2}{2} t} \sum_{u \in N_t} E \Big( \Big\vert U_s^{(u)} - E(U_s^{(u)} | \mathcal{F}_t) 
\Big\vert^p | \mathcal{F}_t \Big) \nonumber \\
\leq &2^p e^{- p \frac{\beta^2}{2} t} \sum_{u \in N_t} E \Big( 2^{p - 1} \Big( \big\vert U_s^{(u)} 
\big\vert^p + \big\vert E(U_s^{(u)} | \mathcal{F}_t) \big\vert^p \Big) | \mathcal{F}_t \Big) \nonumber \\
\leq &2^p e^{- p \frac{\beta^2}{2} t} \sum_{u \in N_t} E \Big( 2^{p - 1} \Big( \big\vert U_s^{(u)} \big\vert^p + 
E \big( \big\vert U_s^{(u)} \big\vert^p | \mathcal{F}_t \big) \Big) | \mathcal{F}_t \Big) \nonumber \\
= &2^{2p} e^{- p \frac{\beta^2}{2} t} \sum_{u \in N_t} E \Big( \big\vert U_s^{(u)} \big\vert^p | \mathcal{F}_t \Big) \text{.}
\end{align}
Hence by \eqref{new_eq4}
\begin{align*}
\sum_{n = 1}^{\infty} E \Big( \Big\vert U_{(n + m_n) \delta} - 
E \Big( U_{(n + m_n) \delta} | \mathcal{F}_{n \delta} \Big) \Big\vert^p \Big) 
&\leq 2^{2 p} \sum_{n = 1}^{\infty} e^{- p \frac{\beta^2}{2} \delta n}
E \Big( \sum_{u \in N_{\delta n}} E^{X_{\delta n}^u} \big( U_{m_n \delta} \big)^p \Big) \\
&\leq 2^{2 p} \sum_{n = 1}^{\infty} e^{- p \frac{\beta^2}{2} \delta n}
E \Big( \sum_{u \in N_{\delta n}} E^{X_{\delta n}^u} \big( M_{m_n \delta} \big)^p \Big) \\
&= 2^{2 p} \sum_{n = 1}^{\infty} e^{- p \frac{\beta^2}{2} \delta n}
E \Big( \sum_{u \in N_{\delta n}} e^{-\beta p |X_{\delta n}^u|} E^0 \big( M_{m_n \delta} \big)^p \Big) \\
&\leq \sum_{n = 1}^{\infty} e^{- p \frac{\beta^2}{2} \delta n} 
e^{\frac{\beta^2}{2} \delta n} \times C \text{,}
\end{align*}
where $C$ is some positive constant and we have applied the Many-to-One Theorem (Theorem \ref{manytoone_0}) and 
and Theorem \ref{Lp} in the last inequality. Since $p > 1$ the sum is $< \infty$. This finishes the proof of \eqref{new_eq3} 
and hence \eqref{new_eq1}.

\underline{Part II}:

Let us now prove that
\begin{equation}
\label{new_eq5}
\lim_{n \to \infty} \Big\vert E \Big( U_{(n + m_n) \delta} | 
\mathcal{F}_{n \delta} \Big) - \pi(B) M_{\infty} \Big\vert = 0 \quad P \text{-a.s.}
\end{equation}
Together with \eqref{new_eq1} this will complete the proof of Theorem \ref{new_SLLN} along lattice times 
for functions $f(x)$ of the form $e^{- \beta |x|} \mathbf{1}_{ \{ x \in B \} }$.

We begin by noting that
\begin{align*}
E \Big( U_{s+t} | \mathcal{F}_t \Big) = &E \Big( \sum_{u \in N_t} e^{- \frac{\beta^2}{2} t} U_s^{(u)} | \mathcal{F}_t \Big) \\
= &\sum_{u \in N_t} e^{- \frac{\beta^2}{2} t} E^{X^u_t} U_s \\
= &\sum_{u \in N_t} e^{- \frac{\beta^2}{2} t} E^{X^u_t} \Big( \sum_{u \in N_s} e^{- \frac{\beta^2}{2} s - \beta |X^u_s|} 
\mathbf{1}_{ \{ X^u_s \in B \} } \Big) \\
= &\sum_{u \in N_t} e^{- \frac{\beta^2}{2} t} \tilde{E}^{X^u_t} \Big( e^{- \frac{\beta^2}{2} s - \beta | \xi_s|}
\mathbf{1}_{ \{ \xi_s \in B \} } e^{ \beta \tilde{L}_s} \Big) \\
= &\sum_{u \in N_t} e^{- \frac{\beta^2}{2} t - \beta |X^u_t|} E^{ \tilde{Q}_{\beta}^{X^u_t}} \Big( \mathbf{1}_{ \{ \xi_s \in B \} } \Big) \\
= &\sum_{u \in N_t} e^{- \frac{\beta^2}{2} t - \beta |X^u_t|} \int_B p(s, X^u_t, y) m( \mathrm{d} y) \text{,}
\end{align*}
where $ \tilde{Q}_{\beta}$ and $p(\cdot)$ were defined in \eqref{Q_beta0} and Proposition \ref{prop_drift_to_0}. Thus 
\begin{equation}
\label{new_eq6}
E \Big( U_{(n + m_n) \delta} | \mathcal{F}_{n \delta} \Big) = \sum_{u \in N_{n \delta}} 
e^{- \frac{\beta^2}{2} n \delta - \beta |X^u_{n \delta}|} \int_B p(m_n \delta, X^u_{n \delta}, y) m( \mathrm{d} y) \text{.}
\end{equation}
Recalling that $m_n = Kn$ where $K > 0$ we have
\[
E \Big( U_{(n + m_n) \delta} | \mathcal{F}_{n \delta} \Big) = \sum_{u \in N_{n \delta}} 
e^{- \frac{\beta^2}{2} n \delta - \beta |X^u_{n \delta}|} \int_B p(Kn \delta, X^u_{n \delta}, y) m( \mathrm{d} y) \text{.}
\]
Now choose $M > \frac{\beta}{2}$ and consider events
\[
C_n := \Big\{ |X^u_{n \delta}| < Mn \delta \quad \forall u \in N_{n \delta} \Big\} \text{.}
\]
Then 
\begin{align*}
&\sum_{u \in N_{n \delta}} e^{- \frac{\beta^2}{2} n \delta - \beta |X^u_{n \delta}|} \int_B p(Kn \delta, X^u_{n \delta}, y) m( \mathrm{d} y) \\
= &\sum_{u \in N_{n \delta}} e^{- \frac{\beta^2}{2} n \delta - \beta |X^u_{n \delta}|} \int_B p(Kn \delta, X^u_{n \delta}, y) 
m( \mathrm{d} y) \ \mathbf{1}_{ C_n^c} \\
& \quad + \sum_{u \in N_{n \delta}} e^{- \frac{\beta^2}{2} n \delta - \beta |X^u_{n \delta}|} \int_B p(Kn \delta, X^u_{n \delta}, y) 
m( \mathrm{d} y) \ \mathbf{1}_{ C_n } \text{.}
\end{align*}
The first sum is $0$ for $n$ large enough by Corollary \ref{Th_rightmost} (or even earlier by Theorem 
\ref{Nlambda_as}). To deal with the second sum we substitute the known transition density $p(\cdot)$: 
\begin{align*}
&\int_B p(Kn \delta, X^u_{n \delta}, y) m( \mathrm{d} y) \ \mathbf{1}_{ C_n } \\
= &\int_B \frac{1}{\sqrt{2 \pi K n \delta}} \exp \bigg\{ \beta \Big( |X^u_{n \delta}| - |y| \Big) - 
\frac{\beta^2}{2} Kn \delta - \frac{(X^u_{n \delta} - y)^2}{2Kn \delta} \bigg\} \\ 
& \quad + \frac{\beta}{2} Erfc \bigg( \frac{|X^u_{n \delta}| + |y| - \beta Kn \delta}{ \sqrt{2Kn \delta}} \bigg) 
e^{-2 \beta |y|} \ \mathrm{d}y \ \mathbf{1}_{ C_n } \text{.}
\end{align*}
Then for any given $M > \frac{ \beta}{2}$ we can choose $K > \frac{2M}{\beta}$ and hence 
\begin{align*}
&\int_B \ \frac{1}{\sqrt{2 \pi K n \delta}} \exp \bigg\{ \beta \Big( |X^u_{n \delta}| - |y| \Big) 
- \frac{\beta^2}{2} Kn \delta - \frac{(X^u_{n \delta} - y)^2}{2Kn \delta} \bigg\} \ \mathrm{d}y \ \mathbf{1}_{ C_n } \\
\leq &\exp \Big\{ \Big( \beta M - \frac{\beta^2}{2} K \Big) n \delta \Big\} \times C' \ \to 0 \text{ as } n \to \infty \text{,}
\end{align*}
where $C'$ is some positive constant and
\begin{align*}
\int_B \frac{\beta}{2} Erfc \bigg( \frac{|X^u_{n \delta}| + |y| - \beta Kn \delta}{ \sqrt{2Kn \delta}} \bigg) 
e^{-2 \beta |y|} \ \mathrm{d}y \ \mathbf{1}_{ C_n } 
\to \int_B \beta e^{-2 \beta |y|} \mathrm{d}y = \pi(B) \quad \text{as } n \to \infty
\end{align*}
since $Erfc(x) \to 2$ as $x \to - \infty$ and $\mathbf{1}_{C_n} \to 1$ as $n \to \infty$. 
Then going back to \eqref{new_eq6} and  we see that
\[
\lim_{n \to \infty} \Big\vert E \Big( U_{(n + m_n) \delta} | 
\mathcal{F}_{n \delta} \Big) - \pi(B) M_{n \delta} \Big\vert = 0 \quad P \text{-a.s.}
\]
and so also
\[
\lim_{n \to \infty} \Big\vert E \Big( U_{(n + m_n) \delta} | 
\mathcal{F}_{n \delta} \Big) - \pi(B) M_{ \infty} \Big\vert = 0 \quad P \text{-a.s.}
\]
As it was mentioned earlier parts I and II together complete the proof of Theorem \ref{new_SLLN} 
along lattice times for functions of the form $f(x) = e^{- \beta |x|} \mathbf{1}_B(x)$. To see this put together \eqref{new_eq1} 
and \eqref{new_eq5} to get that
\[
\lim_{n \to \infty} \Big\vert U_{(n + m_n) \delta} - \pi(B) M_{\infty} 
\Big\vert = 0 \quad P \text{-a.s.}
\]
That is, 
\[
\lim_{n \to \infty} \Big\vert U_{n(K + 1) \delta} - \pi(B) M_{\infty}
\Big\vert = 0 \quad P \text{-a.s.}
\]
Then $K + 1$ can be absorbed into $ \delta$ which stayed arbitrary throughout the proof. 
Also as it was mentioned earlier we can easily replace functions of the form 
$e^{- \beta |x|} \mathbf{1}_B(x)$ with any measurable functions.To see this we note that given any 
meausurable set $A$ and $ \epsilon_1 > 0$ we can find constants $\underline{c}_1$, $ \dots$ , $\underline{c}_n$, 
$ \bar{c}_1$, $ \dots$, $ \bar{c}_n$ and measurable sets $A_1$, $ \dots$, $A_n$ such that 
\[
\Big( \sum_{i = 1}^n \bar{c}_i \mathbf{1}_{A_i}(x) e^{- \beta |x|} \Big) - \epsilon_1 \leq \mathbf{1}_A(x) 
\leq \sum_{i = 1}^n \bar{c}_i \mathbf{1}_{A_i}(x) e^{- \beta |x|}
\]
and
\[
\sum_{i = 1}^n \underline{c}_i \mathbf{1}_{A_i}(x) e^{- \beta |x|} \leq \mathbf{1}_A(x) 
\leq \Big( \sum_{i = 1}^n \underline{c}_i \mathbf{1}_{A_i}(x) e^{- \beta |x|} \Big) + \epsilon_1 \text{.}
\]
Similarly given any positive bounded measurable function $f$ and $ \epsilon_2 > 0$ we can 
find simple functions $ \underline{f}$ and $ \bar{f}$ such that 
\[
\bar{f}(x) - \epsilon_2 \leq f(x) \leq \bar{f}(x)
\]
and 
\[
\underline{f}(x) \leq f(x) \leq \underline{f}(x) + \epsilon_2 \text{.}
\]
Thus given any positive bounded measurable function $f$ and $ \epsilon > 0$ we can 
find functions $ \bar{f}^{\epsilon}(x)$ and $ \underline{f}^{\epsilon}(x)$, which are linear 
combinations of functions of the form $ e^{- \beta |x|} \mathbf{1}_A(x)$ such that 
\[
\bar{f}^{\epsilon}(x) - \epsilon \leq f(x) \leq \bar{f}^{\epsilon}(x)
\]
and 
\[
\underline{f}^{\epsilon}(x) \leq f(x) \leq \underline{f}^{\epsilon}(x) + \epsilon \text{.}
\]
Then 
\begin{align*}
&\bar{f}^{\epsilon}(x) \beta e^{- \beta |x|} \leq (f(x) + \epsilon) \beta e^{- \beta |x|} \\
\Rightarrow &\int_{-\infty}^{\infty} \bar{f}^{\epsilon}(x) \beta e^{- \beta |x|} \mathrm{d}x 
\leq \int_{-\infty}^{\infty} (f(x) + \epsilon) \beta e^{- \beta |x|} \mathrm{d}x + 2 \epsilon 
\end{align*}
and hence $P$-almost surely we have 
\begin{align*}
\limsup_{n \to \infty} e^{- \frac{\beta^2}{2} n \delta} \sum_{u \in N_{n \delta}} f(X^u_{n \delta}) &\leq 
\limsup_{n \to \infty} e^{- \frac{\beta^2}{2} n \delta} \sum_{u \in N_{n \delta}} \bar{f}^{\epsilon}(X^u_{n \delta}) \\ 
&= M_{\infty} \int_{-\infty}^{\infty} \bar{f}^{\epsilon}(x) \beta e^{- \beta |x|} \mathrm{d}x \\
&\leq M_{\infty} \Big( \int_{-\infty}^{\infty} f(x) \beta e^{- \beta |x|} \mathrm{d}x + 2 \epsilon \Big) \text{.}
\end{align*}
Since $\epsilon$ is arbitrary we get 
\[
\limsup_{n \to \infty} e^{- \frac{\beta^2}{2} n \delta} \sum_{u \in N_{n \delta}} f(X^u_{n \delta}) \leq 
M_{\infty} \int_{-\infty}^{\infty} f(x) \beta e^{- \beta |x|} \mathrm{d}x \text{.}
\]
Similarly
\[
\liminf_{n \to \infty} e^{- \frac{\beta^2}{2} n \delta} \sum_{u \in N_{n \delta}} f(X^u_{n \delta}) \geq 
M_{\infty} \int_{-\infty}^{\infty} f(x) \beta e^{- \beta |x|} \mathrm{d}x \text{.}
\]
Also any bounded measurable function $f$ can be written as a difference of two positive 
bounded measurable functions. This completes the proof of Theorem \ref{new_SLLN} with the limit 
taken along lattice times. Now let us finish the proof of the theorem by extending it 
to the continuous-time limit. 

\underline{Part III}:

As in the previous parts of the proof it is sufficient to consider functions of the form 
$f(x) = e^{- \beta |x|} \mathbf{1}_B(x)$ for measurable sets $B$.

Let us now take $ \epsilon > 0$ and define the following set 
\[
B^{\epsilon}(x) := B \cap \Big( -|x| - \frac{1}{\beta} \log (1 + \epsilon ) , \
|x| + \frac{1}{\beta} \log (1 + \epsilon ) \Big) \text{.}
\]
Note that $y \in B^{\epsilon}(x)$ iff $y \in B$ and $e^{- \beta |y|} > \frac{e^{- \beta |x|}}{1 + \epsilon}$. 
Furthermore, for $\delta, \epsilon > 0$ let
\[
\Xi_B^{\delta, \epsilon}(x) := 
\mathbf{1}_{ \{ X^u_s \in B^{\epsilon}(x) \ \forall s \in [0, \delta] \ \forall u \in N_{\delta} \} }
\]
and
\[
\xi_B^{\delta, \epsilon}(x) := E^x \Big( \Xi_B^{\delta, \epsilon}(x) \Big) \text{.}
\]
Then for $t \in \ [n \delta, \ (n+1) \delta]$
\begin{align}
\label{new_eq7}
U_t &=  e^{- \frac{\beta^2}{2}t} \sum_{u \in N_t} e^{- \beta |X^u_t|} \mathbf{1}_{ \{ X^u_t \in B \} } \nonumber \\
&= \sum_{u \in N_{n \delta}} e^{- \frac{ \beta^2}{2} n \delta} U^{(u)}_{t - n \delta} \geq e^{- \frac{ \beta^2}{2} n \delta} 
\sum_{u \in N_{n \delta}} U^{(u)}_{t - n \delta} \ \Xi_B^{ \delta , \epsilon} (X^u_{n \delta}) \nonumber \\
&\geq e^{- \frac{ \beta^2}{2} n \delta} \sum_{u \in N_{n \delta}} e^{- \frac{\beta^2}{2} \delta} \
\frac{e^{- \beta |X^u_{n \delta}|}}{1 + \epsilon} \ \Xi_B^{ \delta , \epsilon} (X^u_{n \delta})
\end{align}
because at time $t$ there is at least one descendent of each particle alive at time 
$n \delta $. Let us consider the sum
\[
e^{- \frac{ \beta^2}{2} n \delta} \sum_{u \in N_{n \delta}} e^{- \beta |X^u_{n \delta}|} 
\Xi_B^{ \delta , \epsilon} (X^u_{n \delta}) \text{.}
\]
Note that 
\begin{equation}
\label{new_eq8}
\Xi_B^{ \delta , \epsilon} (X^u_{n \delta} ) 
\text{ are independent conditional on } \mathcal{F}_{n \delta} \text{,}
\end{equation}
\begin{equation}
\label{new_eq9}
E \Big( 
e^{- \frac{ \beta^2}{2} n \delta} \sum_{u \in N_{n \delta}} e^{- \beta |X^u_{n \delta}|} 
\Xi_B^{ \delta , \epsilon} (X^u_{n \delta}) \Big\vert \mathcal{F}_{n \delta} \Big) 
= e^{- \frac{ \beta^2}{2} n \delta} \sum_{u \in N_{n \delta}} e^{- \beta |X^u_{n \delta}|} 
\xi_B^{ \delta , \epsilon} (X^u_{n \delta}) \text{,}
\end{equation}
and
\begin{equation}
\label{new_eq10}
\lim_{n \to \infty} e^{- \frac{ \beta^2}{2} n \delta} \sum_{u \in N_{n \delta}} 
e^{- \beta |X^u_{n \delta}|} \xi_B^{ \delta , \epsilon} (X^u_{n \delta}) 
= \int \xi_B^{\delta, \epsilon}(x) \pi(\mathrm{d}x) M_{\infty} \text{.}
\end{equation}
The last equation follows from the SLLN along lattice times which we already proved. 
Also we should point out that if we further let $ \delta \to 0 $, $ \xi_B^{\delta, \epsilon}(x)$ 
will converge to $ \mathbf{1}_{B}(x)$ and \eqref{new_eq10} will converge to 
$ \pi(B) M_{ \infty}$. Our next step then is to show that
\begin{equation}
\label{new_eq11}
\lim_{n \to \infty} \Big\vert 
e^{- \frac{ \beta^2}{2} n \delta} \sum_{u \in N_{n \delta}} e^{- \beta |X^u_{n \delta}|} 
\Xi_B^{ \delta , \epsilon} (X^u_{n \delta}) - 
e^{- \frac{ \beta^2}{2} n \delta} \sum_{u \in N_{n \delta}} e^{- \beta |X^u_{n \delta}|} 
\xi_B^{ \delta , \epsilon} (X^u_{n \delta})
\Big\vert = 0 \text{.}
\end{equation}
In view of \eqref{new_eq8} and \eqref{new_eq9} we prove this using the method of Part I. 
That is, we exploit the Borel-Cantelli Lemma and in order to do that we need to show 
that for some $p \in (1,2) $
\begin{align*}
\sum_{n = 1}^{ \infty} E \bigg( \Big\vert 
e^{- \frac{ \beta^2}{2} n \delta} \sum_{u \in N_{n \delta}} e^{- \beta |X^u_{n \delta}|} 
\Xi_B^{ \delta , \epsilon} (X^u_{n \delta}) 
- E \Big( e^{- \frac{ \beta^2}{2} n \delta} \sum_{u \in N_{n \delta}} e^{- \beta |X^u_{n \delta}|} 
\Xi_B^{ \delta , \epsilon} (X^u_{n \delta}) \Big\vert \mathcal{F}_{n \delta} \Big) 
\Big\vert^p \bigg) < \infty \text{.}
\end{align*}
A similar argument to the one used in Part I (see \eqref{new_eq4} gives us that
\begin{align*}
&\sum_{n = 1}^{ \infty} E \bigg( \Big\vert e^{- \frac{ \beta^2}{2} n \delta} \sum_{u \in N_{n \delta}} e^{- \beta |X^u_{n \delta}|} 
\Xi_B^{ \delta , \epsilon} (X^u_{n \delta}) 
- E \Big( e^{- \frac{ \beta^2}{2} n \delta} \sum_{u \in N_{n \delta}} e^{- \beta |X^u_{n \delta}|} 
\Xi_B^{ \delta , \epsilon} (X^u_{n \delta}) \Big\vert \mathcal{F}_{n \delta} \Big) 
\Big\vert^p \bigg) \\
\leq &\sum_{n = 1}^{\infty} 2^{2p} e^{- p \frac{\beta^2}{2} n \delta} E \Big( \sum_{u \in N_{n \delta}} 
e^{- \beta p |X^u_{n \delta}|} \xi_B^{ \delta , \epsilon} (X^u_{n \delta}) \Big) \text{,}
\end{align*}
where $ \Xi_B^{ \delta , \epsilon} (X^u_{n \delta})$ is an indicator function 
and therefore raising it to the power $p$ leaves it unchanged. 
Using once again the Many-to-One Lemma and the usual change of measure we get 
\begin{align*}
&\sum_{n = 1}^{\infty} 2^{2p} e^{- p \frac{\beta^2}{2} n \delta} E \Big( \sum_{u \in N_{n \delta}} 
e^{- \beta p |X^u_{n \delta}|} \xi_B^{ \delta , \epsilon} (X^u_{n \delta}) \Big) \\
\leq &\sum_{n = 1}^{\infty} 2^{2p} e^{- p \frac{\beta^2}{2} n \delta} E \Big( 
\sum_{u \in N_{n \delta}} e^{- \beta p |X^u_{n \delta}|} \Big) \\
= &\sum_{n = 1}^{\infty} 2^{2p} e^{- (p - 1) \frac{\beta^2}{2} n \delta} E^{\tilde{Q}_{\beta}} \Big( 
\sum_{u \in N_{n \delta}} e^{- \beta (p - 1) |X^u_{n \delta}|} \Big) < \infty \text{.}
\end{align*}
Thus we have proved \eqref{new_eq11}, which together with \eqref{new_eq10} implies that
\begin{align*}
\liminf_{n \to \infty} e^{- \frac{ \beta^2}{2} n \delta} \sum_{u \in N_{n \delta}} 
e^{- \beta |X^u_{n \delta}|} \Xi_B^{ \delta , \epsilon} (X^u_{n \delta}) 
&= \liminf_{n \to \infty} e^{- \frac{ \beta^2}{2} n \delta} \sum_{u \in N_{n \delta}} 
e^{- \beta |X^u_{n \delta}|} \xi_B^{ \delta , \epsilon} (X^u_{n \delta}) \\
&= \int \xi_B^{\delta, \epsilon}(x) \pi( \mathrm{d}x) M_{ \infty} \text{.}
\end{align*}
Putting this into \eqref{new_eq7} and letting $n = \lfloor \frac{t}{\delta} \rfloor$ gives us
\[
\liminf_{t \to \infty} U_t \geq \frac{e^{- \frac{{\beta}^2}{2} \delta}}{1 + \epsilon} 
\int \xi_B^{\delta, \epsilon}(x) \pi( \mathrm{d}x) M_{ \infty} \text{.}
\]
Letting $ \delta, \epsilon \searrow 0$ we get $ \liminf_{t \to \infty} U_t \geq \pi(B) M_{ \infty}$. 
Since the same result also holds for $B^c$ we can easily see that
$ \limsup_{t \to \infty} U_t \leq \pi(B) M_{ \infty}$. Thus
\[
\lim_{t \to \infty} U_t = \pi(B) M_{ \infty} \text{.}
\]
Then the same argument as at the end of Part II of the proof extends the result for functions 
of the form $ \mathbf{1}_B(x) e^{- \beta |x|}$ to all bounded measurable functions. 
\end{proof}
\bibliographystyle{acm}

\def\cprime{$'$}

\end{document}